\newtheorem{lemma}{Lemma}[section]
\newtheorem{proposition}[lemma]{Proposition}
\newtheorem{corollary}[lemma]{Corollary}
\newtheorem{theorem}[lemma]{Theorem}
\newtheorem{example}[lemma]{Example}
\newtheorem{remark}[lemma]{Remark}
\newcommand{\ZZ}{\mathbb{Z}}
\renewcommand{\P}{\mathcal{P}}
\newcommand{\comment}[1]{}
\DeclareMathOperator{\Cay}{Cay}
\DeclareMathOperator{\Ml}{ML}
\title{\bf \Large On $k$-rainbow domination in regular graphs} 
\author{
{\sc Bo\v stjan Kuzman}}
\date{University of Ljubljana, Faculty of Education,\\
and Institute of Mathematics, Physics and Mechanics (IMFM)}
\begin{document}

\maketitle

\begin{abstract}
The $k$-rainbow domination problem is studied for regular graphs. We prove that the $k$-rainbow domination number $\gamma_{rk}(G)$ of a $d$-regular graph for $d\leq k\leq 2d$ is bounded below by $\displaystyle{\left\lceil kn/2d\right\rceil}$, where $n$ is the order of a graph. We determine necessary conditions for regular graphs to attain this bound and
find several examples. As an application, we determine exact $k$-rainbow domination numbers for all cubic Cayley graphs over abelian groups.\medskip

\noindent {\bf Keywords:} Domination; Rainbow domination number; Regular graphs; Cayley graphs.

\noindent {\bf 2000	Mathematics Subject Classification:} 05C69, 05C85.

\end{abstract}

\footnotetext{
This work is supported in part by the Slovenian Research Agency (ARRS), research program P1-0285 and research projects J1-9108, J1-1694, J1-1695.
\\ [+1ex] {\em Corresponding author e-mail:} bostjan.kuzman@pef.uni-lj.si (B. Kuzman)}

\section{Introduction}
\label{sec:intro}

The concept of $k$-rainbow domination, as introduced by Bre\v sar et al. in~\cite{Bresar H R 2008}, is an extension of the classical domination problem in graphs and was initiated by Hartnell and Rall~\cite{Hartnell Rall 2004}, who studied the domination numbers of cartesian product $G\square K_2$ in relation to the Vizing conjecture. 
The problem has natural applications in analysis of networks.\medskip

Let $G=(V,E)$ be a finite, simple graph, and let $k$ be a given nonnegative integer. We denote by ${\cal C}=\{1,\ldots,k\}$ the set of \emph{colors}. We say that a coloring function $f\colon V\to 2^{\cal C}$ that assigns to each vertex $v\in V$ a subset of colors $f(v)\subseteq {\cal C}$, is a \emph{$k$-rainbow dominating function} (or $k$-RDF) on $G$, if
$$\forall v\in V\colon f(v)=\emptyset \implies \bigcup_{u\sim v}f(u)={\cal C}.$$
In other words, every non-colored vertex $v\in V$ is $k$-rainbow dominated by its neighbours of all possible colors.
For a given $k$-RDF, we define the \emph{weight} of $f$ as 
$$w(f)=\sum_{v\in V}|f(v)|.$$
The \emph{$k$-rainbow domination number} $\gamma_{rk}(G)$ is the minimal possible weight attained by a $k$-RDF on $G$:
$$\gamma_{rk}(G)=\min\{w(f)\mid f\colon V\to2^{\cal C}\text{ is a $k$-RDF}\}.$$
Any $k$-rainbow domination function $f$ of weight $w(f)=\gamma_{rk}(G)$ is called a \emph{$\gamma_{rk}(G)$-function}. 
\medskip

Clearly, any $1$-rainbow domination function on $G$ corresponds to a usual dominating set for $G$, and $1$-rainbow domination number $\gamma_{r1}(G)$ coincides with the usual domination number $\gamma(G)$ of the graph. Also, it was shown in \cite{Bresar H R 2008}, that $\gamma_{rk}(G)=\gamma(G\square K_k)$, where $G\square K_k$ denotes the cartesian product of $G$ with a complete graph on $k$ vertices.
For given graph $G$ and positive integer $k$, determination of the exact value $\gamma_{rk}(G)$ is known to be NP-complete~\cite{Bresar Kraner 2007, Chang 2009}.  

For $k=2,3$, the exact values and upper bounds for $k$-rainbow domination numbers of specific graph families such as the generalized Petersen graphs, trees, products of paths and cycles, grid graphs, etc, were studied in several papers, see for instance~\cite{Bresar Kraner 2007, Chang 2009, Kraner Rall Tepeh lexi 2013, Shao 2014, Stepien 2014, Wang 2013}. 
Moreover, general upper bounds for $k$-rainbow domination numbers of connected graphs are also known and were proven to be tight for some specific graphs, see~\cite{Fujita 2015, Furuya 2018}.  
%

However, there are few results available on determining the full set of $k$-rainbow dominaton numbers  of particular graph families for all relevant $k$. In order to determine these for some graph $G$, one could first determine $\gamma_{r1}(G)$ and then apply the following theorem to obtain some lower and upper bounds for $\gamma_{rk}(G)$ for $k\geq 2$.

\begin{theorem}[Shao et al., 2014]\label{thm: t<t'}\label{thm: rk basic} 
Let $G=(V,E)$ be a connected graph of order $n$.
\begin{enumerate}[(i)]
\item If $k'>k$, then $$\gamma_{rk'}(G)\leq \gamma_{rk}(G)+(k'-k)\left\lfloor \frac{\gamma_{rk}(G)}{k}\right\rfloor.$$
\item If $\Delta$ denotes the maximal degree in graph $G$, then
$$\gamma_{rk}(G)\geq \left\lceil \frac{kn}{\Delta+k}\right\rceil.$$ 
\end{enumerate}
\end{theorem}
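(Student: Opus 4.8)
The two parts are essentially independent, so I would handle them separately; both reduce to a counting or averaging argument applied to a fixed $\gamma_{rk}(G)$-function $f$.

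For part (ii), the plan is a double count of the color-incidences on edges leaving the uncolored vertices. Split $V$ into the colored set $V_+=\{v:f(v)\neq\emptyset\}$ and the uncolored set $V_0=\{v:f(v)=\emptyset\}$. The rainbow condition forces $\sum_{u\sim v}|f(u)|\ge k$ for every $v\in V_0$, since all $k$ colors must appear among the neighbors of such a $v$. Summing this over $v\in V_0$ and exchanging the order of summation rewrites the left-hand side as $\sum_{u}|f(u)|\cdot|\{v\in V_0:v\sim u\}|$, which is at most $\Delta\sum_u|f(u)|=\Delta\,w(f)$ because every vertex has at most $\Delta$ neighbors. This yields $\Delta\,w(f)\ge k|V_0|$. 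I would then apply the two crude bounds $|V_+|\le w(f)$ (each colored vertex carries weight at least $1$) and hence $|V_0|=n-|V_+|\ge n-w(f)$; substituting gives $(\Delta+k)\,w(f)\ge kn$, and integrality of $w(f)=\gamma_{rk}(G)$ promotes the bound to the ceiling.

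For part (i), the plan is to build a $k'$-RDF by enlarging $f$ with the $k'-k$ new colors $k+1,\dots,k'$, all loaded onto a single well-chosen color class. The key observation is that for each color $i\in\{1,\dots,k\}$ the set $V_i=\{v:i\in f(v)\}$ dominates the uncolored vertices: any $v$ with $f(v)=\emptyset$ sees color $i$ on some neighbor, so it has a neighbor in $V_i$. Since $\sum_{i=1}^k|V_i|=w(f)=\gamma_{rk}(G)$, a pigeonhole argument yields an index $i^*$ with $|V_{i^*}|\le\lfloor\gamma_{rk}(G)/k\rfloor$. I would then set $g(v)=f(v)\cup\{k+1,\dots,k'\}$ for $v\in V_{i^*}$ and $g(v)=f(v)$ otherwise. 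Any vertex still uncolored under $g$ was uncolored under $f$, so it already sees $1,\dots,k$, and it has a neighbor in $V_{i^*}$ now carrying all the new colors; hence $g$ is a valid $k'$-RDF of weight $w(f)+(k'-k)|V_{i^*}|\le\gamma_{rk}(G)+(k'-k)\lfloor\gamma_{rk}(G)/k\rfloor$.

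The routine parts are the inequalities $|V_+|\le w(f)$ and $|\{v\in V_0:v\sim u\}|\le\Delta$ in (ii) and the domination check for $g$ in (i). The one genuinely clever point, which I expect to be the crux, is recognizing in (i) that every color class $V_i$ already dominates the uncolored vertices, so that loading the smallest class with all new colors suffices; everything else is averaging and integrality. I should double check the pigeonhole step: $k$ nonnegative integers summing to $w$ cannot all exceed $\lfloor w/k\rfloor$, since otherwise their sum would be at least $k(\lfloor w/k\rfloor+1)>w$, which is exactly what produces the floor in the statement.
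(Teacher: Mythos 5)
Your proof is correct. Note that the paper itself does not prove this statement at all --- it is quoted from Shao et al.\ (2014) and used as a black box --- so there is no in-paper proof to compare against line by line. That said, both of your arguments are consistent with the techniques the paper does use elsewhere: your double count in (ii) (summing $\sum_{u\sim v}|f(u)|\ge k$ over uncolored $v$ and swapping the order of summation) is exactly the triple-counting argument in the proof of Lemma~\ref{lemma: w(f)<}, there specialized to $d$-regular graphs where it yields the sharper bound $w(f)\ge (kn+2e_2)/(k+d)$; and your construction in (i) --- pigeonhole a color class $V_{i^*}$ of size at most $\lfloor\gamma_{rk}(G)/k\rfloor$ and load all new colors onto it, using that every color class dominates the uncolored vertices --- is precisely the construction the paper recalls in Remark~\ref{remark: construct f'}, applied with all $k'-k$ new colors at once rather than one at a time. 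The pigeonhole step and the verification that $g$ is a valid $k'$-RDF are both sound, so the argument stands as a complete, self-contained proof of the cited theorem.
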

In this paper, we focus on $k$-rainbow domination numbers of regular graphs. In Section~\ref{sec: reg}, we prove the following Theorem, which improves the lower bound of Theorem \ref{thm: rk basic} for $d$-regular graphs, whenever $d\leq k\leq 2d$.
\begin{theorem}\label{thm: gamma rk pred}
Let $G$ be a 
$d$-regular graph of order $n$.  Then
\begin{align}
\gamma_{rk}(G)&\geq \left\lceil \frac{kn}{2d}\right\rceil \text{ for }k\leq 2d
\end{align}
and $\gamma_{rk}(G)=n \text{ for }k\geq 2d$.
\end{theorem}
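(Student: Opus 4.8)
The plan is to bound the weight of an arbitrary $k$-RDF $f$ from below by double counting, after splitting the vertex set as $V=V_{\emptyset}\cup V_{+}$, where $V_{\emptyset}=\{v:f(v)=\emptyset\}$ and $V_{+}=V\setminus V_{\emptyset}$. Write $a=|V_{\emptyset}|$, $b=|V_{+}|=n-a$, and $w=w(f)=\sum_{u\in V_{+}}|f(u)|$. For $u\in V_{+}$ let $d_{\emptyset}(u)$ be the number of its neighbours lying in $V_{\emptyset}$, and for $v\in V_{\emptyset}$ let $p(v)$ be the number of its neighbours lying in $V_{+}$; note $d_{\emptyset}(u)\le d$ and $p(v)\le d$, while $\sum_{u\in V_{+}}d_{\emptyset}(u)=\sum_{v\in V_{\emptyset}}p(v)$ counts the edges between the two parts. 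The goal is to produce two linear inequalities in $a$ and $w$ and combine them. The equality $\gamma_{rk}(G)=n$ for $k\ge 2d$ will follow once the lower bound $w\ge n$ is in hand, since colouring every vertex with the single colour $\{1\}$ is a $k$-RDF of weight $n$, so $\gamma_{rk}(G)\le n$.

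The first, ``coverage'', inequality comes directly from the defining condition: every $v\in V_{\emptyset}$ satisfies $\bigcup_{u\sim v}f(u)=\mathcal{C}$, so $\sum_{u\sim v}|f(u)|\ge k$. Summing over $v\in V_{\emptyset}$ and re-indexing by the colouring vertex gives $ka\le\sum_{u\in V_{+}}|f(u)|\,d_{\emptyset}(u)\le d\,w$. The difficulty is that this is essentially all one extracts from the definition in isolation: together with the trivial $w\ge b=n-a$ it only yields $w\ge kn/(k+d)$, which is exactly the bound of Theorem~\ref{thm: rk basic}(ii) and is strictly weaker than the claimed $kn/(2d)$ for $d<k\le 2d$. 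So the main obstacle is to extract a second, genuinely sharper inequality; a purely local discharging scheme fails here, because a single-colour vertex all of whose neighbours are empty can be ``overdrawn''.

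The key new ingredient I would use is an ``excess'' count. Fix $v\in V_{\emptyset}$. At most $p(v)\le d$ of its neighbours are coloured, yet their weights sum to at least $k$; hence the excess weight $\sum_{u\sim v,\,u\in V_{+}}(|f(u)|-1)=\big(\sum_{u\sim v}|f(u)|\big)-p(v)\ge k-p(v)\ge k-d$ is carried by the neighbourhood of $v$. Summing over all $v\in V_{\emptyset}$ and re-indexing by $u$ gives $(k-d)a\le\sum_{u\in V_{+}}(|f(u)|-1)\,d_{\emptyset}(u)\le d\sum_{u\in V_{+}}(|f(u)|-1)=d(w-b)=d(w-n+a)$, which rearranges to
\[
dw\ge dn+(k-2d)a .
\]

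Finally I would combine the two inequalities. For $k\ge 2d$ the coefficient $k-2d$ is nonnegative, so the excess inequality alone gives $dw\ge dn$, i.e.\ $w\ge n$, completing that case. For $k\le 2d$, scaling the coverage inequality $ka\le dw$ by the nonnegative factor $(2d-k)/k$ and adding it to $dw+(2d-k)a\ge dn$ eliminates $a$ and yields $\tfrac{2d}{k}\,dw\ge dn$, that is $w\ge kn/(2d)$; since $w$ is an integer this gives $w\ge\lceil kn/(2d)\rceil$. As $f$ was arbitrary, $\gamma_{rk}(G)\ge\lceil kn/(2d)\rceil$. The one delicate point is that the combination must be a \emph{nonnegative} linear combination of the two inequalities, which is precisely what the hypothesis $k\le 2d$ guarantees.
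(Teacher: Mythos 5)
Your proof is correct and takes essentially the same approach as the paper: your coverage inequality is the paper's inequality (\ref{w(f)1}) with the $2e_2$ term dropped, and your excess inequality $dw\ge dn+(k-2d)a$ becomes, upon substituting $a=n-c$, exactly the paper's inequality (\ref{w(f)3}). The only differences are cosmetic --- you derive that second inequality directly from an excess count rather than via the edge-count bookkeeping of Lemma~\ref{lemma: <c<}, and you finish with a nonnegative linear combination of the two inequalities where the paper instead splits into the cases $c\ge n/2$ and $c<n/2$.
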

In Section~\ref{sec: reg}, we also prove several other inequalities and interesting results for $k$-rainbow domination functions of regular graphs. In Section~\ref{sec: dRDR}, we define a \emph{$d$-rainbow domination regular graph} as a $d$-regular graph such that $\gamma_{rk}(G)$ attains the lower bounds from Theorem~\ref{thm: gamma rk pred} for all $d\leq k\leq 2d$ and investigate necessary conditions for parameters of such graphs.
In Section~\ref{sec: rk cubic}, we investigate exact $k$-rainbow domination numbers of all connected cubic Cayley graphs over abelian groups for all $k$ and determine all $3$-rainbow domination regular graphs among these.
In Section~\ref{sec: remarks}, some further examples of $4$-rainbow domination regular graphs are given and some open questions are posed.

\section{Lower bounds for regular graphs}\label{sec: reg}

Unless otherwise noted, throughout this section, graph $G=(V,E)$ will be regular of order $n$ and degree $d$. 
By using elementary counting arguments, we shall obtain certain bounds on the weight of any $k$-rainbow domination function $f$ on $G$.

Suppose that graph $G=(V,E)$ and some function $f\colon V\to 2^{\cal C}$ are given. In what follows, we denote 
the non-disjoint sets of vertices that are colored with color $i\in{\cal C}$ 
by $$V_i=\{v\in V\colon i\in f(v)\} \text{ for }i=1,\ldots,k,$$
and by $V_0=\{v\in V\colon f(v)=\emptyset\}$ the set of non-colored vertices. Further, we denote 
the disjoint sets of vertices which are colored with exactly $i$ different colors by
$$C_i=\{v\in V\colon |f(v)|=i\} \text{ for }i=0,\ldots,k,$$
their union of all colored vertices by $$C=\bigcup_{i=1}^kC_i,$$
and the sets of edges with exactly $i$ end-vertices colored by
\begin{align*}
E_0&=\{\{u,v\}\in E\colon f(u)=f(v)=\emptyset\}, \\
E_1&=\{\{u,v\}\in E\colon f(u)=\emptyset\ne f(v)\}, \\
E_2&=\{\{u,v\}\in E\colon f(u)\ne\emptyset,f(v)\ne \emptyset\}.
\end{align*}
Also, we denote the sizes of respective sets by $$e_i=|E_i|,\quad n_i=|V_i|,\quad c_i=|C_i|,\quad
c=|C|=n-c_0.
$$ 

Our first observation is obtained by double counting the elements of $E_1$ in two different ways, and then omitting some terms to obtain upper and lower bounds for the number of $f$-colored vertices $c$.  
\begin{lemma}\label{lemma: <c<}
Let $G$ be a $d$-regular graph of order $n$ and let $f$ be some $k$-rainbow domination function on $G$. Then
\begin{align}e_1=cd-2e_2=c_0d-2e_0\label{e1=}\end{align}
and the following inequalities hold:
\begin{align}
c_0-\frac{2e_0}{d}&\leq c\leq c_0+\frac{2e_2}{d},\label{c<c0+}\\
\frac{n}{2}-\frac{e_0}{d}&\leq c\leq \frac{n}{2}+\frac{e_2}{d}.
\end{align}
\end{lemma}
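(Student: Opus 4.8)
The plan is to exploit the partition $E = E_0 \cup E_1 \cup E_2$ together with the fact that every vertex of $G$ has degree exactly $d$, so that counting edge-endpoints on the colored side and on the uncolored side yields two independent expressions for $e_1$. First I would sum the degrees over all colored vertices: since $G$ is $d$-regular this sum equals $cd$, while on the other hand each edge of $E_2$ (both endpoints colored) is counted twice and each edge of $E_1$ (exactly one endpoint colored) is counted once, so that $cd = 2e_2 + e_1$. Symmetrically, summing degrees over the uncolored vertices $C_0 = V_0$ gives $c_0 d = 2e_0 + e_1$, because now the edges of $E_0$ are the ones counted twice while those of $E_1$ are again counted once. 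Rearranging the two identities produces exactly the chain $e_1 = cd - 2e_2 = c_0 d - 2e_0$ of~\eqref{e1=}.

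For the inequalities~\eqref{c<c0+} I would equate the two expressions for $e_1$, obtaining $cd - 2e_2 = c_0 d - 2e_0$, and divide by $d$ to reach the exact relation $c = c_0 + \tfrac{2}{d}(e_2 - e_0)$. From here the two bounds follow by discarding one of the nonnegative quantities $e_0$, $e_2$: dropping the term $-\tfrac{2e_0}{d}\le 0$ gives the upper bound $c \le c_0 + \tfrac{2e_2}{d}$, and dropping $\tfrac{2e_2}{d}\ge 0$ gives the lower bound $c \ge c_0 - \tfrac{2e_0}{d}$. This is precisely the ``omit some terms'' step signalled in the text, and it is the only place where keeping track of signs is essential.

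Finally, to pass from~\eqref{c<c0+} to the bounds expressed in terms of $n$, I would use that the colored and uncolored vertices partition $V$, so $c + c_0 = n$, i.e.\ $c_0 = n - c$. Substituting into the upper bound gives $c \le (n-c) + \tfrac{2e_2}{d}$, hence $2c \le n + \tfrac{2e_2}{d}$ and $c \le \tfrac{n}{2} + \tfrac{e_2}{d}$; substituting into the lower bound gives $c \ge (n-c) - \tfrac{2e_0}{d}$, hence $c \ge \tfrac{n}{2} - \tfrac{e_0}{d}$, which is the remaining displayed inequality.

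I do not anticipate a genuine obstacle, since the argument is pure double counting; the only care required is bookkeeping, namely ensuring that in each degree sum it is the ``both-endpoints'' class that carries the factor $2$ ($E_2$ in the colored count, $E_0$ in the uncolored count). It is worth noting that the $k$-rainbow domination property of $f$ is never invoked: the statement in fact holds for an \emph{arbitrary} map $f\colon V \to 2^{\cal C}$, and uses only the regularity of $G$ and the edge partition induced by the colored/uncolored dichotomy.
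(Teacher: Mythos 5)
Your proof is correct and follows exactly the argument the paper intends: the identity \eqref{e1=} comes from double counting $E_1$ via the degree sums over $C$ and $C_0$, and the inequalities follow by equating the two expressions, discarding a nonnegative term, and substituting $c_0=n-c$. Your closing observation that the rainbow domination property is never used is also accurate.
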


Next, we use another double counting argument to get several lower bounds for the weight of a $k$-RDF.
\begin{lemma}\label{lemma: w(f)<}
Let $G$ be a $d$-regular graph of order $n$ and let $f$ be some $k$-rainbow domination function on $G$.
Then the following inequalities hold:
\begin{align}
w(f)&\geq \frac{(n-c)k+2e_2}{d},\label{w(f)1}\\
w(f)&\geq \frac{kn+2e_2}{k+d}\label{w(f)2},\\
w(f)&\geq \frac{(k-d)}{d}n+\frac{2d-k}{d}c,\label{w(f)3}\\
w(f)&\geq n-\left(\frac{2d-k}{d}\right)c_0.\label{w(f)4}
\end{align}
\end{lemma}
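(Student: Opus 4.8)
The plan is to establish the first inequality \eqref{w(f)1} by a single double-counting estimate and then to deduce the remaining three as purely algebraic consequences, invoking Lemma \ref{lemma: <c<} together with the elementary observation that $c\le w(f)$, since each colored vertex carries at least one color.

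First I would count, in two ways, the quantity $S=\sum_{v\in V_0}\sum_{u\sim v}|f(u)|$. On the one hand, the defining property of a $k$-RDF guarantees that for every uncolored vertex $v\in V_0$ the colors on its neighbours exhaust ${\cal C}$, so $\sum_{u\sim v}|f(u)|\ge \bigl|\bigcup_{u\sim v}f(u)\bigr|=k$; summing over the $c_0=n-c$ vertices of $V_0$ gives $S\ge k(n-c)$. On the other hand, interchanging the order of summation, only colored vertices $u\in C$ contribute, each with weight $|f(u)|$ counted once for each of its uncolored neighbours, so $S=\sum_{u\in C}|f(u)|\,d_0(u)$, where $d_0(u)$ denotes the number of uncolored neighbours of $u$.

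The key step is to bound this last sum sharply from above. Writing $d_0(u)=d-\bigl(d-d_0(u)\bigr)$, where $d-d_0(u)$ is precisely the number of colored neighbours of $u$, and using $|f(u)|\ge 1$ for $u\in C$, I would estimate
\begin{align*}
S=d\,w(f)-\sum_{u\in C}|f(u)|\bigl(d-d_0(u)\bigr)\le d\,w(f)-\sum_{u\in C}\bigl(d-d_0(u)\bigr)=d\,w(f)-2e_2,
\end{align*}
since $\sum_{u\in C}\bigl(d-d_0(u)\bigr)=2e_2$ counts each edge of $E_2$ from both of its colored endpoints. Combining the two estimates for $S$ yields $d\,w(f)-2e_2\ge k(n-c)$, which rearranges to \eqref{w(f)1}. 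This double counting, and in particular the observation that replacing $d_0(u)$ by $d$ overcounts by at least $2e_2$ rather than simply discarding the term, is the only genuinely combinatorial part of the argument, and is where I expect the main work to lie.

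The remaining bounds then follow by substitution. For \eqref{w(f)2} I would use $c\le w(f)$ in \eqref{w(f)1} to replace $-kc$ by $-k\,w(f)$, collect the $w(f)$ terms, and divide by $k+d$. For \eqref{w(f)3} I would instead feed the inequality $2e_2\ge 2dc-dn$, which is just the rearrangement of the upper bound $c\le \tfrac{n}{2}+\tfrac{e_2}{d}$ from Lemma \ref{lemma: <c<}, into \eqref{w(f)1} and simplify. Finally \eqref{w(f)4} is \eqref{w(f)3} rewritten via $c=n-c_0$. None of these last three steps presents any difficulty beyond bookkeeping.
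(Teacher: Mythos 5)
Your proof is correct and follows essentially the same route as the paper: a double count of colour/uncoloured-neighbour incidences yielding \eqref{w(f)1}, then $c\le w(f)$ and the bound $c\le \tfrac{n}{2}+\tfrac{e_2}{d}$ from Lemma~\ref{lemma: <c<} to derive \eqref{w(f)2}--\eqref{w(f)4}. If anything, your handling of the key step is slightly more careful than the paper's, which asserts that subtracting $2e_2$ gives the \emph{exact} number of triples, whereas (as you note) one really needs $|f(u)|\ge 1$ to see that $\sum_{u\in C}|f(u)|\bigl(d-d_0(u)\bigr)\ge 2e_2$, so that $d\,w(f)-2e_2$ is an upper bound for the count.
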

\begin{proof}
First, we count the number of ordered triples $(u,v,i)$, where $u\sim v$, $f(u)=\emptyset$ and $i\in f(v)$. Since each empty vertex is dominated by at least $k$ neighbours of different colors, we see this number is at least $(n-c)k$. On the other hand, there are exactly $w(f)$ pairs $(v,i)$ with $i\in f(v)$, and each of these has $d$ neighbours, some colored, others non-colored. Subtracting twice the number of edges with both edges colored we get the exact number of triples. Therefore,  $(n-c)k\leq w(f)d-2e_2$, and  inequality~(\ref{w(f)1}) follows.
Since $w(f)\geq c$, inequality (\ref{w(f)2}) is easily obtained from (\ref{w(f)1}). 
Eliminating $2e_2$ from inequalities (\ref{w(f)1}) and (\ref{c<c0+}), we combine them to obtain (\ref{w(f)3}) and then rewrite with $c=n-c_0$ to obtain (\ref{w(f)4}).
\end{proof}

We merge Lemmas~\ref{lemma: <c<} and \ref{lemma: w(f)<} to obtain the following Proposition.
\begin{proposition}\label{thm: gamma rk}
Let $G$ be a  $d$-regular graph of order $n$ and let $f$ be some $k$-rainbow domination function on $G$. Then 
\begin{align*}
w(f)&\geq \left\lceil \frac{kn}{2d}\right\rceil \text{ for }k<2d,
\end{align*}
and $w(f)\geq n$ for $k\geq 2d.$
\end{proposition}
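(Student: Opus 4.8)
The plan is to prove Proposition~\ref{thm: gamma rk} by leveraging the lower bounds already established in Lemma~\ref{lemma: w(f)<}, since the real work of double counting has been done there. The key observation is that among inequalities (\ref{w(f)1})--(\ref{w(f)4}), the ones whose right-hand sides are independent of the variable quantities $e_2$, $c$, and $c_0$ will give us a clean bound, but since these quantities can vary, I expect to need a case split according to the sign of the coefficient $2d-k$ (equivalently, whether $k<2d$, $k=2d$, or $k>2d$).

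For the case $k<2d$, first I would observe that inequality (\ref{w(f)2}) reads $w(f)\geq (kn+2e_2)/(k+d)$, and since $e_2\geq 0$ this immediately yields $w(f)\geq kn/(k+d)$; however this is weaker than the claimed $kn/(2d)$ precisely when $k<2d$, so (\ref{w(f)2}) alone does not suffice and I must instead use inequalities involving $c$. The natural route is to combine (\ref{w(f)1}) or (\ref{w(f)3}) with the bounds on $c$ from Lemma~\ref{lemma: <c<}. Specifically, inequality (\ref{w(f)3}) gives $w(f)\geq \frac{k-d}{d}n+\frac{2d-k}{d}c$, and inequality (\ref{w(f)1}) gives $w(f)\geq \frac{(n-c)k+2e_2}{d}\geq \frac{(n-c)k}{d}$. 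The first is increasing in $c$ (since $2d-k>0$) while the second is decreasing in $c$; the plan is therefore to take a weighted average of these two bounds that cancels the dependence on $c$. Choosing weights proportional to the respective $c$-coefficients, I would add $\lambda$ times (\ref{w(f)1}) to $(1-\lambda)$ times (\ref{w(f)3}) so that the $c$-terms vanish, which should collapse to exactly $w(f)\geq kn/(2d)$. Taking the ceiling is then justified because $w(f)$ is an integer.

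For the boundary case $k=2d$, the coefficient $2d-k$ vanishes, so inequality (\ref{w(f)4}) reads simply $w(f)\geq n$, and likewise (\ref{w(f)3}) gives $w(f)\geq \frac{k-d}{d}n = n$; meanwhile $kn/(2d)=n$, so the bound is attained as stated. For the case $k>2d$, the coefficient $2d-k$ is negative, so in (\ref{w(f)4}) the term $-\frac{2d-k}{d}c_0$ is nonnegative and hence $w(f)\geq n$ follows directly (alternatively $w(f)\geq c_0+c\geq $ arguments); since the claim for $k\geq 2d$ is only $w(f)\geq n$, this closes the remaining case. The main obstacle I anticipate is purely bookkeeping: choosing the correct convex combination weight $\lambda$ in the $k<2d$ case and verifying that the $c$-coefficients cancel exactly to produce the coefficient $k/(2d)$ on $n$, rather than something merely comparable. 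Once the algebra is set up correctly the ceiling follows from integrality of the weight, and no genuinely new idea beyond the two lemmas is required.
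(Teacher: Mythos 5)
Your proposal is correct and follows essentially the same route as the paper: both arguments rest entirely on inequalities (\ref{w(f)1}), (\ref{w(f)3}) and (\ref{w(f)4}) of Lemma~\ref{lemma: w(f)<} together with integrality of $w(f)$. The only differences are cosmetic: for $k<2d$ the paper splits into the cases $c\geq n/2$ and $c<n/2$ and applies (\ref{w(f)3}) or (\ref{w(f)1}) respectively, which is equivalent to your convex combination (the weight $\lambda=\frac{2d-k}{2d}$ on the bound $w(f)\geq (n-c)k/d$ does cancel the $c$-terms and the constants do collapse to $kn/2d$, as you anticipated), while for $k>2d$ your direct appeal to (\ref{w(f)4}) is marginally shorter than the paper's use of (\ref{w(f)1}) and (\ref{c<c0+}).
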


\begin{proof}
For $k<2d$, we check two cases.
If $c\geq n/2$, we apply inequality (\ref{w(f)3}) of Lemma~\ref{lemma: w(f)<} to get
$$w(f)\geq \frac{(k-d)}{d}n+\frac{2d-k}{d}\cdot\frac{n}{2}=\frac{kn}{2d}.$$
If $c<n/2$, then $n-c>n/2$ and we apply Lemma~\ref{lemma: <c<} to get $$w(f)> \frac{kn/2+2e_2}{d}\geq \frac{kn}{2d}.$$
In both cases, $w(f)$ is an integer, greater or equal to $\frac{kn}{2d}$.

For $k=2d$, we get $w(f)\geq n$ directly from (\ref{w(f)3}). For $k>2d$, we get $w(f)\geq 2(n-c)+\frac{2e_2}{d}$ from (\ref{w(f)1}) and hence $w(f)\geq n$ from (\ref{c<c0+}). 
\end{proof}

Since for any $\gamma_{rk}(G)$-function $f$ we have $w(f)=\gamma_{rk}(G)\leq n$, Theorem~\ref{thm: gamma rk pred} is a direct corollary of Proposition~\ref{thm: gamma rk}
and needs no further proof.


\begin{example}Exact $k$-rainbow domination numbers of cycles $C_n$, $n\geq 3$, were determined for $k=2$ by Bre\v sar and Kraner \cite{Bresar Kraner 2007} and for $k=3$ by Shao et al. \cite{Shao 2014} as
\begin{align}
\gamma_{r2}(C_n)&=\left\lceil \frac{n}{2}\right\rceil 
+\begin{cases} 1,&n\equiv 2\pmod 4,\\ 0,&\text{otherwise,}
\end{cases}\\
\gamma_{r3}(C_n)&=\left\lceil \frac{3n}{4}\right\rceil.
\end{align}
Using Theorem~\ref{thm: gamma rk pred}, we get $\gamma_{rk}(C_fn)=n$ for all $k\geq 4$ and lower bounds
$$
\gamma_{r2}(C_n)\geq \left\lceil \frac{n}{2}\right\rceil,
\gamma_{r3}(C_n)\geq \left\lceil \frac{3n}{4}\right\rceil,
$$
which can be used to shorten the original proofs significantly. 
\end{example}

We can also rewrite inequality (\ref{w(f)3}) to obtain bounds for parameters $c$ and $c_0$.

\begin{proposition}
Let $G$ be a $d$-regular graph and let $f$ be a $\gamma_{rk}(G)$-function, where $0<k< 2d$. Then
\begin{align}
c&\leq \frac{d\gamma_{rk}(G)+(d-k)n}{2d-k}, \text{ and }\\
c_0&\geq \frac{d(n-\gamma_{rk}(G))}{2d-k}.\end{align}
\end{proposition}

\section{$d$-rainbow domination regular graphs}
\label{sec: dRDR}
The next theorem gives necessary conditions for a $d$-regular graph to attain the lower bound for $\gamma_{rk}(G)$.
\begin{theorem}\label{thm: =nk/2d}
Let $G$ be a $d$-regular graph of order $n$ and let $k<2d$. If $\gamma_{rk}(G)=\frac{kn}{2d}$, then $k\geq d$, $2d|n$ and $G$ is bipartite.

\end{theorem}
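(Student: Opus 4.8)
The plan is to extract the structural consequences of equality from the two-case argument used in the proof of Proposition~\ref{thm: gamma rk}. Fix a $\gamma_{rk}(G)$-function $f$, so that $w(f)=kn/2d$. First I would rule out the case $c<n/2$: there inequality~(\ref{w(f)1}) gives $w(f)\geq\frac{(n-c)k+2e_2}{d}>\frac{kn}{2d}$ strictly, contradicting $w(f)=kn/2d$. Hence $c\geq n/2$, and I apply~(\ref{w(f)3}); since $k<2d$ the coefficient $(2d-k)/d$ is positive, so the right-hand side is strictly increasing in $c$ and equals $kn/2d$ precisely when $c=n/2$. Therefore equality forces $c=n/2$, and consequently $c_0=n-c=n/2$ as well. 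In particular $n$ is even.

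Next I would read off the edge counts. Substituting $n-c=n/2$ into~(\ref{w(f)1}) yields $w(f)\geq \frac{kn}{2d}+\frac{2e_2}{d}$; since $w(f)=kn/2d$ this forces $e_2=0$, and moreover equality holds throughout~(\ref{w(f)1}). The exact identity~(\ref{e1=}) gives $cd-2e_2=c_0d-2e_0$, and with $c=c_0$ and $e_2=0$ this yields $e_0=0$. Thus no edge has both endpoints colored and no edge has both endpoints empty, so every edge joins $V_0$ to $C$; as $|V_0|=|C|=n/2$, the graph $G$ is bipartite with these two parts. This settles bipartiteness.

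For $k\geq d$ I would use only the trivial bound $w(f)\geq c$ (each colored vertex carries at least one color): from $c=n/2$ we get $kn/2d=w(f)\geq n/2$, hence $k\geq d$. The divisibility $2d\mid n$ is the most delicate point and relies on the equality case of~(\ref{w(f)1}), which means that each empty vertex meets exactly $k$ color-slots among its neighbours, all distinct; equivalently every color $i\in\mathcal C$ appears exactly once in the neighbourhood of each empty vertex. Fixing a color $i$, I would double-count the incidences between empty vertices and their neighbours lying in $V_i$: from the empty side this count is exactly $c_0=n/2$ (one per empty vertex), while from the colored side, since $e_2=0$ forces every vertex of $V_i$ to have all $d$ neighbours empty, it equals $d\,n_i$. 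Hence $d\,n_i=n/2$, so $n_i=n/2d$ must be an integer, giving $2d\mid n$.

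The main obstacle I anticipate is the careful bookkeeping of the equality conditions: translating ``$w(f)=kn/2d$'' into the vanishing of $e_0$ and $e_2$ and, above all, into the statement that each empty vertex meets each color exactly once. This last equivalence is what powers the divisibility count, and it must be justified from the tightness of the triple-counting bound behind~(\ref{w(f)1}) rather than merely from its numerical value.
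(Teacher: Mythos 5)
Your proposal is correct and follows essentially the same route as the paper: rule out $c<n/2$ via~(\ref{w(f)1}), force $c=n/2$ via~(\ref{w(f)3}), deduce $e_0=e_2=0$ from~(\ref{w(f)1}) and~(\ref{e1=}) to get bipartiteness, use $w(f)\geq c$ for $k\geq d$, and count color--empty-vertex incidences for $2d\mid n$. The only (harmless) variation is in the last step, where you extract the exact count $d\,n_i=c_0$ from the tightness of the triple-counting, whereas the paper only needs the inequality $d\,n_i\geq c_0$ together with $\sum_i n_i=w(f)=kn/2d$ to force $n_i=n/2d$.
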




\begin{proof}
For $k<2d$, let $w(f)=\frac{kn}{2d}$ for some $\gamma_{rk}(G)$-function $f$. 
Then $c<n/2$ is not possible by (\ref{w(f)1}) and
$c\geq n/2$ forces $c=n/2$ by (\ref{w(f)3}).  Since $\gamma_{rk}(G)\geq c$, we also get $k\geq d$. Moreover,  it follows from (\ref{w(f)1}) and (\ref{e1=})  that $e_0=e_2=0$ and $e_1=e$, so $G$ is bipartite with bipartition sets $C_0, C$ of size $n/2$. 
Now, count pairs $(i,u)$, such that $i\in{\cal C}$, $u\in V$, $f(u)=\emptyset$. Their number equals $v_i d$ and is at least $c_0=n/2$, so $v_i\geq n/2d$. But $w(f)=\sum i v_i\geq kn/2d$ implies equality $v_i=n/2d$ for all $i>0$, so $2d|n$.
(Observe that the proof also implies that for any $\gamma_{rk}(G)$-function $f$, equality $v_i=n/2d$ must hold for all colors $i\in{\cal C}$.)
\end{proof}

\begin{example}
The Franklin graph $F$ in Figure~\ref{fig: bicubic 12 not suff} shows that conditions of Theorem~\ref{thm: =nk/2d} are not sufficient. Since assigning a color $i\in{\cal C}$ to any two vertices in one bipartition set cannot dominate all 6 vertices in the other bipartition set, we get  $\gamma_{r3}(F)>6$. 

\begin{figure}[h!]\centering
\includegraphics[width=3cm]{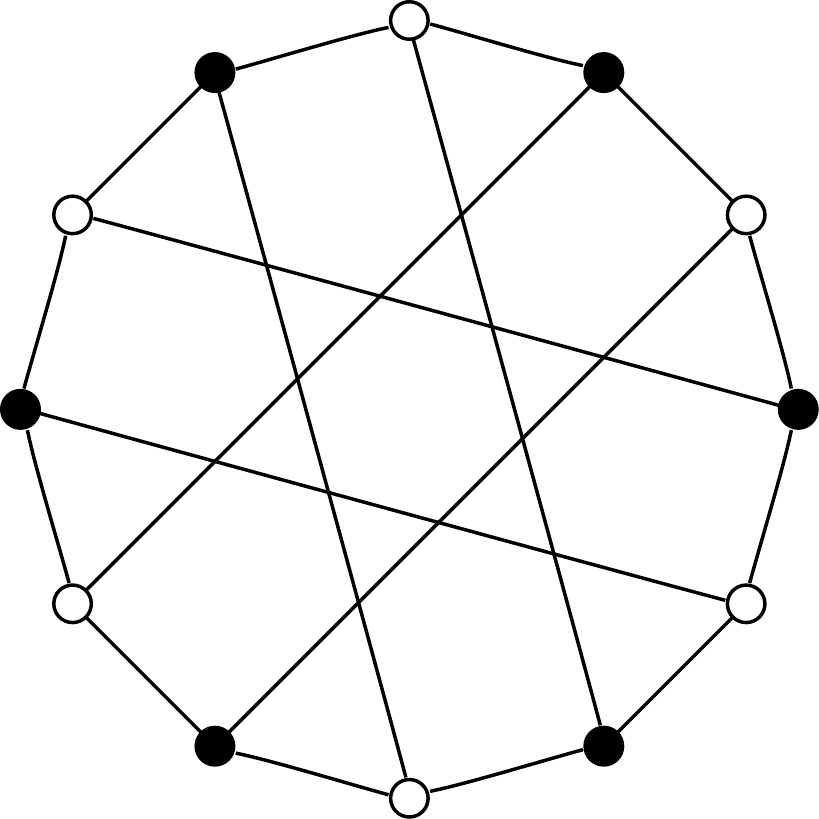}
\caption{Franklin graph is $3$-regular and bipartite with $\gamma_{r3}(F)> 3n/2d$.}
\label{fig: bicubic 12 not suff}
\end{figure}
\end{example}
\begin{example}
On the other hand, it is easy to see that the complete bipartite graph $G=K_{d,d}$ 
satisfies conditions of Theorem~\ref{thm: =nk/2d} and
the equality $\gamma_{rk}(G)=\frac{kn}{2d}$ holds for all $d\leq k\leq 2d$. Indeed, for $d\leq k\leq 2d$, denote by $u_i, v_i\in V(G)$ the vertices in the bipartition sets and let
$$f(u_i)=\{j\in{\cal C}\colon j\equiv i\pmod d\} \text{ and }f(v_i)=\emptyset \>\text{ for }  i=1,\ldots,d.$$
Then each color appears exactly once and dominates $d$ non-colored vertices, so $f$ is a $k$-RDF of weight $w(f)=k=k\frac{n}{2d}$.

\end{example}
The `rainbow property' of graphs $K_{d,d}$ described above can be generalized as follows.
We define that graph $G$ is \emph{$d$-rainbow domination regular} (or $d$-RDR), if it is $d$-regular and equality $\gamma_{rd}(G)=\frac{|V(G)|}{2}$ holds. We now show this also implies a stronger condition.
\begin{proposition}
Suppose that $G$ is a $d$-regular graph of order $n$, and let $d\leq k <2d$ for some $k$.
If $\gamma_{rk}(G)=\frac{kn}{2d}$, then $\gamma_{r(k+1)}(G)=\frac{(k+1)n}{2d}$. 
\end{proposition}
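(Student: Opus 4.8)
The plan is to establish the two inequalities separately. The lower bound $\gamma_{r(k+1)}(G)\ge\frac{(k+1)n}{2d}$ comes essentially for free: by Theorem~\ref{thm: =nk/2d} the hypothesis $\gamma_{rk}(G)=\frac{kn}{2d}$ already forces $2d\mid n$, so $\left\lceil\frac{(k+1)n}{2d}\right\rceil=\frac{(k+1)n}{2d}$, and this is exactly the bound supplied by Theorem~\ref{thm: gamma rk pred} (applicable since $k+1\le 2d$). Hence everything reduces to producing a $(k+1)$-RDF of weight exactly $\frac{(k+1)n}{2d}$, i.e. to showing how to spend only $\frac{n}{2d}$ extra units of weight in order to accommodate the new color $k+1$.

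For the upper bound I would start from a $\gamma_{rk}(G)$-function $f$ and exploit the rigid structure extracted in the proof of Theorem~\ref{thm: =nk/2d}: $G$ is bipartite with parts $C$ (the colored vertices) and $C_0$ (the empty vertices), each of size $n/2$, and --- by the parenthetical remark there --- every color class satisfies $|V_i|=\frac{n}{2d}$. The idea is to recycle one existing color class. Define $f'$ by adding the new color $k+1$ to every vertex of $V_1$ and leaving $f$ unchanged elsewhere, so that $f'(v)=f(v)\cup\{k+1\}$ for $v\in V_1$ and $f'(v)=f(v)$ otherwise.

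It then remains to check two things. First, the weight: since $V_1\subseteq C$ consists of already-colored vertices, $f'$ colors the same vertex set as $f$, and $w(f')=w(f)+|V_1|=\frac{kn}{2d}+\frac{n}{2d}=\frac{(k+1)n}{2d}$. Second, validity: the empty vertices of $f'$ are precisely those of $f$, namely $C_0$. Each $v\in C_0$ already saw all of $\{1,\dots,k\}$ on its neighbours, and in particular had a neighbour in $V_1$, because $f$ is a $k$-RDF and color $1$ must appear in $\bigcup_{u\sim v}f(u)=\mathcal{C}$; that neighbour now also carries color $k+1$, so $\bigcup_{u\sim v}f'(u)=\{1,\dots,k+1\}$. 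Thus $f'$ is a $(k+1)$-RDF of the required weight, and combining with the lower bound yields $\gamma_{r(k+1)}(G)=\frac{(k+1)n}{2d}$.

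The only genuinely nontrivial ingredient is the value $|V_1|=\frac{n}{2d}$: this is what guarantees that the extra cost is exactly $\frac{n}{2d}$ rather than something larger, and it is precisely the content of the remark inside Theorem~\ref{thm: =nk/2d}. Everything else is bookkeeping; in particular the domination of $C_0$ by $V_1$ is automatic from the $k$-RDF condition, so no separate covering argument is needed. I would only flag the boundary case $k+1=2d$, where the target weight equals $n$ and the same construction still applies.
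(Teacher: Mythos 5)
Your proof is correct and follows essentially the same route as the paper: both arguments take a $\gamma_{rk}(G)$-function $f$, invoke the fact (from the proof of Theorem~\ref{thm: =nk/2d}) that every color class has size exactly $\frac{n}{2d}$, and extend $f$ by attaching the new color $k+1$ to one entire color class, giving a $(k+1)$-RDF of weight $\frac{(k+1)n}{2d}$. Your write-up is in fact slightly more complete than the paper's, since you explicitly verify the matching lower bound via Theorems~\ref{thm: gamma rk pred} and~\ref{thm: =nk/2d} and check the domination condition, both of which the paper leaves implicit.
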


\begin{proof}
Suppose that $d\leq k<2d$ and $f\colon V\to \{1,\ldots,k\}$ is $\gamma_{rk}(G)$-function with $\gamma_{rk}(G)=\frac{kn}{2d}$. Then $v_i=\frac{n}{2d}$ for all $i=1,\ldots,k$ by proof of Theorem~\ref{thm: =nk/2d} and a $\gamma_{r(k+1)}$-function $f'$ can be explicitly constructed from $f$ by selecting (any) color $i\in\{1,\ldots,k\}$ and
adding color $k+1$ to all $i$-colored vertices, that is, defining 
$$f'(v)=
\begin{cases} 
f(v)\cup  \{k+1\};&i\in f(v),\\
f(v);&i\notin f(v).\\
\end{cases}
$$
Obviously, $f'$ is a $(k+1)$-RDF of weight $w(f')=\sum_i v_i=\frac{(k+1)n}{2d}$.
\end{proof}
\begin{corollary}
Let $G$ be a $d$-regular graph of order $n$. Then $G$ is $d$-rainbow domination regular if and only if $\gamma_{rk}(G)=\frac{kn}{2d}$ for all $d\leq k\leq 2d$.
\end{corollary}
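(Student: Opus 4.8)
The plan is to prove the corollary as a straightforward equivalence, leaning on the definition of $d$-rainbow domination regular together with the preceding Proposition. The forward direction ($\Leftarrow$) is immediate: if $\gamma_{rk}(G)=\frac{kn}{2d}$ holds for all $d\leq k\leq 2d$, then specializing to $k=d$ gives $\gamma_{rd}(G)=\frac{dn}{2d}=\frac{n}{2}$, which is exactly the defining condition for $G$ to be $d$-RDR. So this direction requires only substitution.

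For the reverse direction ($\Rightarrow$), suppose $G$ is $d$-RDR, i.e. $G$ is $d$-regular and $\gamma_{rd}(G)=\frac{n}{2}=\frac{dn}{2d}$. I would argue by induction on $k$, starting from the base case $k=d$, where the hypothesis gives $\gamma_{rd}(G)=\frac{dn}{2d}$ directly. For the inductive step, assume $\gamma_{rk}(G)=\frac{kn}{2d}$ for some $k$ with $d\leq k<2d$. Then the preceding Proposition applies verbatim (its hypotheses are exactly $d\leq k<2d$ together with $\gamma_{rk}(G)=\frac{kn}{2d}$) and yields $\gamma_{r(k+1)}(G)=\frac{(k+1)n}{2d}$. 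Iterating from $k=d$ up through $k=2d-1$ then establishes the equality for every $k$ in the range $d\leq k\leq 2d$, which completes the reverse implication.

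One small point I would want to confirm is that the top endpoint $k=2d$ is reached correctly: the Proposition's inductive step is stated for $d\leq k<2d$, so applying it with $k=2d-1$ produces the value at $k=2d$, namely $\gamma_{r(2d)}(G)=\frac{2dn}{2d}=n$. This is consistent with Theorem~\ref{thm: gamma rk pred}, which asserts $\gamma_{rk}(G)=n$ for $k\geq 2d$, so there is no gap at the upper end and no separate verification is needed there.

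I do not anticipate a genuine obstacle here, since all the content lives in the earlier Proposition; the corollary is essentially the statement that a single-step implication can be iterated across the whole interval. The only thing to be careful about is bookkeeping of the index range — ensuring the induction starts at the correct base ($k=d$, supplied by the definition) and that each application of the Proposition stays within its stated hypotheses $d\leq k<2d$. With that indexing handled cleanly, the proof is a short induction and a trivial converse.
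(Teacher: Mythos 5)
Your proof is correct and follows exactly the route the paper intends: the converse is immediate from the definition at $k=d$, and the forward direction is the iteration of the preceding Proposition from $k=d$ up to $k=2d-1$, with the endpoint $k=2d$ consistent with Theorem~\ref{thm: gamma rk pred}. No issues.
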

\begin{example}
For any $d\geq 1$, complete bipartite graph  $K_{d,d}$ is a $d$-RDR graph.
It is not difficult to see that a cycle $C_n$ is a $2$-RDR graph if and only if $n\equiv 0\pmod 4$.
\end{example}
We shall identify some more examples of $d$-RDR graphs in the next section.

\section{Cubic Cayley graphs over abelian groups}\label{sec: rk cubic}

In this section, we study the $k$-rainbow domination numbers for connected cubic (that is, $3$-regular) Cayley graphs over some finite abelian group $H$. Our motivation comes from the fact that Cayley graphs are a large class of regular graphs with nice symmetry properties that could provide some more examples of $d$-RDR graphs, and also from some existing results on values and bounds for $2$- and $3$-rainbow domination numbers of generalised Petersen graphs that partially overlap with our class and could be extended to larger $k$. 
For cubic graphs, we readily have 
$\gamma_{rk}(G)\geq  \frac{kn}{6}$ for $3 \geq k<6$
and $\gamma_{rk}(G)=n$ for $k\geq 6$ by Theorem~\ref{thm: gamma rk pred}.

\medskip

Recall that for any finite group $H$ and inverse closed subset $S=S^{-1}\subseteq H\setminus\{1_H\}$, the (non-directed and simple) \emph{Cayley graph} $G=\Cay(H, S)$ is defined by vertex set $V(G)=H$ and edge set $E(G)=\{\{g,h\}\colon g,h\in H, gh^{-1}\in S\}$. Graph $G=\Cay(H,S)$ is regular of degree $d=|S|$ and $G$ is connected if and only if $S$ is a generating set for group $H$. 

Note that in order for $\Cay(H,S)$ to be cubic, we must have either $S=\{a,a^{-1},b\}$, where $a\ne a^{-1}$ and $b=b^{-1}$, or $S=\{a,b,c\}$, where $a=a^{-1}$, $b=b^{-1}$ and $c=c^{-1}$. This implies the following well-known classification of such graphs. We omit details of the proof.
\begin{proposition}
Let $G=\Cay(H,S)$ be a connected cubic Cayley graph over some finite abelian group $H$. Then $G$ is isomorphic either to a $m$-sided prism $\Pr(m)$ or to a M\"obius ladder $\Ml(m)$, where $m=\frac{|H|}{2}\geq 2$ and graphs $\Pr(m)$ and $\Ml(m)$
of order $n=2m$ both have vertex set 
$V=\{u_i,v_i\colon i\in\ZZ_m\}$, while the edge set is equal to 
$E=\{\{u_i,v_i\},\{u_i,u_{i+1}\},\{v_i,v_{i+1}\}\colon i\in \ZZ_m\}$ for prisms $\Pr(m)$ and
to 
$E=\{\{u_i,v_i\},\{u_i,u_{i+1}\},\{v_i,v_{i+1}\}\colon i\in \ZZ_m, i\ne m-1\}\cup\{\{u_{m-1},v_{m-1}\},\{u_{m-1},v_0\},\{v_{m-1},u_0\}\}$ for M\"obius ladders $\Ml(m)$.
\end{proposition}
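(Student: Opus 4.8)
The plan is to use the two possible forms of the generating set $S$ recorded above and, in each, pin down the isomorphism type of $H=\gen{S}$ and read off the resulting graph.

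First I would treat the case $S=\{a,a^{-1},b\}$ with $\ord(a)=r\geq 3$ and $b^2=1_H$. Since $H$ is abelian and generated by $S$, it equals $\gen{a}\gen{b}$, and I would split according to whether $b\in\gen{a}$. If $b\in\gen{a}$, then $b$ is the unique involution of a cyclic group, so $r$ is even, $b=a^{r/2}$, and $H=\gen{a}\cong\ZZ_r$; writing $H$ additively with $a=1$ turns $S$ into $\{1,-1,r/2\}$, which is exactly the connection set of $\Ml(r/2)$. If instead $b\notin\gen{a}$, then $\gen{a}\cap\gen{b}=\{1_H\}$ and $H\cong\ZZ_r\times\ZZ_2$, and the connection set $\{(1,0),(-1,0),(0,1)\}$ is precisely that of the prism $\Pr(r)$. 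In both subcases $m=|H|/2$.

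Next I would treat $S=\{a,b,c\}$ with three distinct involutions. Here $H$ is abelian and generated by involutions, hence elementary abelian of rank at most $3$, and the only freedom is whether $a,b,c$ are independent. If they are, $H\cong\ZZ_2^3$ and $G$ is the $3$-cube, which I would identify with $\Pr(4)$ via $Q_3\cong C_4\,\square\,K_2$. If there is a relation, then, since three distinct nonidentity elements spanning a $2$-dimensional space exhaust its nonzero elements, necessarily $c=ab$, so $H\cong\ZZ_2^2$, the set $S$ is the full set of nonidentity elements, and $G\cong K_4$, which I would identify with $\Ml(2)$.

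The identifications from the explicit connection sets require no real work; I expect the one delicate point to be reconciling the two sporadic elementary-abelian graphs $K_4$ and $Q_3$ with the prism and M\"obius families — in particular confirming directly from the edge sets given in the statement that $\Ml(2)\cong K_4$ and $\Pr(4)\cong Q_3$ — and making sure the boundary index $m=2$ is consistently handled, since the prism $\Pr(2)$ degenerates as a simple graph.
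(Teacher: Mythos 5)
Your argument is correct and follows exactly the route the paper indicates (the dichotomy on whether $S=\{a,a^{-1},b\}$ or $S$ consists of three involutions); the paper explicitly omits the details, and your proposal supplies them, including the correct handling of the boundary identifications $K_4\cong \Ml(2)$ and $Q_3\cong \Pr(4)$ and the degeneracy of $\Pr(2)$. No gaps.
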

This reduces the problem to determination of $\gamma_{rk}(G)$ for prisms and M\"obius ladders. We shall state and prove separate theorems for each case.
Note that the prisms $\Pr(m)$ are just generalised Petersen graphs $GP(m,1)$, so for $k=2,3$, their $k$-rainbow domination numbers were already determined in \cite{Shao 2014, Shao 2019}. 

\begin{theorem}\label{thm: prism}
Let $G=\Pr(m)$ be the $m$-sided prism of order $n=2m$, $m\geq 3$. Then
\begin{itemize}
\item $\gamma_{r1}(G)=\left\lceil\frac{m}{2}\right\rceil+
\begin{cases} 
1;& m \equiv 2\pmod 4,\\
0;& m\equiv 0,1,3\pmod 4.
\end{cases}$
\item $\gamma_{r2}(G)=m$.

\item $\gamma_{r3}(G)=m+
\begin{cases} 
0;& m \equiv 0\pmod 6,\\
1,& m\equiv 1,2,3,5\pmod 6,\\
2;& m\equiv 4\pmod 6.\\
\end{cases}$
\item
$\gamma_{r4}(G)=\left\lceil\frac{4m}{3}\right\rceil+
\begin{cases} 
0;& m \equiv 0,1\pmod 6,\\
1; &m\equiv 2,3,4,5\pmod 6,\\
\end{cases}
$

\item
$\gamma_{r5}(G)= \left \lceil\frac{5m}{3}\right\rceil+
\begin{cases} 
0;& m \equiv 0,1,2,5\pmod 6,\\
1; &m\equiv 3,4 \pmod 6.\\
\end{cases}
$

\item $\gamma_{rk}(G)=2m$ for $k\geq 6.$
\end{itemize}
Moreover, for $m\equiv 0\pmod 6$ and $3\leq k\leq 6$, the lower bound $\gamma_{rk}(G)=\frac{km}{3}$ is attained. For $k=4,5$, appropriate $\gamma_{rk}(G)$-functions
are given in Table~\ref{tbl: prizme}.
\end{theorem}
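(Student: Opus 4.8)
The plan is to treat the six ranges of $k$ separately, leaning on earlier results where possible and reserving the bulk of the work for $k=4,5$. For $k\ge 6$ the equality $\gamma_{rk}(G)=2m$ is immediate from Theorem~\ref{thm: gamma rk pred} with $d=3$ and $n=2m$. For $k=1$ the value is the ordinary domination number $\gamma(\Pr(m))$, which I would take from the standard formula for the prism (circular ladder) $C_m\square K_2$, or reprove quickly by a period-$4$ pattern on the two $m$-cycles with a correction for $m\equiv 2\pmod 4$. For $k=2,3$ the graph is the generalised Petersen graph $GP(m,1)$, so I would simply cite the values from \cite{Shao 2014, Shao 2019}. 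This leaves $k=4$ and $k=5$, where both a lower and an upper bound must be established.

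For the upper bounds I would exhibit explicit periodic $k$-rainbow dominating functions. The prism has a natural column structure indexed by $\ZZ_m$, each column being the rung $\{u_i,v_i\}$, so I would look for a fundamental tile on six consecutive columns of total weight $2k$ (average $k/3$ per column) whose periodic repetition is a valid $k$-RDF. Concretely, I would record optimal tiles in Table~\ref{tbl: prizme}, verify directly that every empty vertex $u_i$ sees all $k$ colours among its neighbours $u_{i-1},u_{i+1},v_i$ (and likewise for every empty $v_i$ among $v_{i-1},v_{i+1},u_i$), and then tile $\ZZ_m$ by repetitions of the period-$6$ block. When $m\not\equiv 0\pmod 6$ the tiling leaves between one and five residual columns, which I would patch with a short boundary segment closing the cycle; the weight of this patch is chosen to account exactly for the $+1$ corrections appearing in the stated formulas.

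For the lower bounds I would start from $\gamma_{rk}(G)\ge\lceil km/3\rceil$, which is Proposition~\ref{thm: gamma rk} with $d=3$. This already gives the claimed value in the residue classes marked $+0$, so the remaining task is to gain one extra unit in the classes marked $+1$. Here I would argue by contradiction: assuming $w(f)=\lceil km/3\rceil$ for a $\gamma_{rk}(G)$-function $f$, I would push the counting identity $e_1=cd-2e_2=c_0d-2e_0$ and the inequalities of Lemmas~\ref{lemma: <c<}~and~\ref{lemma: w(f)<} to their equality cases, forcing the slack terms $e_0,e_2$ and the colour-class sizes $n_i$ into rigid values, as in the proof of Theorem~\ref{thm: =nk/2d}. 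Combined with the fact that the colored and uncolored vertices must then split the two $m$-cycles in a near-periodic way, a short parity and periodicity argument on $\ZZ_m$ shows this configuration is impossible for the offending residues, yielding $\gamma_{rk}(G)\ge\lceil km/3\rceil+1$.

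The concluding assertion is then immediate: for $m\equiv 0\pmod 6$ all correction terms vanish and the computed values become $\gamma_{r3}=m$, $\gamma_{r4}=\tfrac{4m}{3}$, $\gamma_{r5}=\tfrac{5m}{3}$, $\gamma_{r6}=2m$, each equal to $\tfrac{km}{3}$, so the lower bound of Theorem~\ref{thm: gamma rk pred} is attained throughout $3\le k\le 6$. I expect the lower-bound refinement for the $+1$ residues to be the main obstacle, since the global counting inequalities alone only deliver the ceiling and the extra unit must be extracted from the rigidity of the equality case; on the construction side, the most error-prone part will be the residue bookkeeping, namely checking that each boundary patch for $m\bmod 6\neq 0$ never costs less than the stated weight and never breaks the rainbow condition at the seam.
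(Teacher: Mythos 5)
Your overall architecture matches the paper's: lower bounds from Proposition~\ref{thm: gamma rk}, upper bounds from explicit functions (the paper additionally gets cheap upper bounds from Theorem~\ref{thm: t<t'}(i) applied to the already-determined $\gamma_{r(k-1)}$, which saves verifying seams for most residues), and an equality-case analysis to gain the extra unit in the remaining classes. Two of your lower-bound cases do go through essentially as sketched: for $m\equiv 3\pmod 6$ with $k=4$ (and likewise with $k=5$) the bound $km/3$ is an integer and the prism is non-bipartite, so Theorem~\ref{thm: =nk/2d} immediately gives strict inequality; and for $m\equiv 2,5\pmod 6$ with $k=4$, inequality (\ref{w(f)4}) forces $c_0\ge m$, whence each $n_i\ge\lceil m/3\rceil$ and $w(f)\ge 4\lceil m/3\rceil>\lceil 4m/3\rceil$, a contradiction.

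The genuine gap is the case $m\equiv 4\pmod 6$ (for $k=4$, and again for $k=5$). There the equality assumption does \emph{not} force $c_0\ge m$; it only gives $c_0\ge m-1$, and the subcase $c_0=m-1$ survives all the global counting: one obtains $e_2=3$, $e_0=0$ and rigid colour-class sizes, but no contradiction yet. The paper must then fix the noncoloured set up to symmetry, decompose $V$ into the $m$ overlapping six-vertex blocks $\P_i=\{u_{i-1},u_i,u_{i+1},v_{i-1},v_i,v_{i+1}\}$, use $\sum_i\gamma_i=3w(f)$ to pin down every block weight $\gamma_i$, and then follow a forcing chain of the values $|f(\cdot)|$ around the cycle until two specific vertices are forced to carry the same colour set, leaving a rung undominated. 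Your ``short parity and periodicity argument on $\ZZ_m$'' is a placeholder for exactly this step, and it is not short; the equality cases of Lemmas~\ref{lemma: <c<} and \ref{lemma: w(f)<} alone do not rule the configuration out. (A smaller inaccuracy: in these tight configurations the coloured and uncoloured vertices do not split the two $m$-cycles; they alternate along each rim cycle, with the two rims out of phase.) Note also that for $k=4$, $m\equiv4\pmod 6$ the monotonicity upper bound is $+2$, so an explicit weight-$\left(\lceil 4m/3\rceil+1\right)$ function must be produced as well; your tiling plan covers this, but it is a required ingredient, not an optional check.
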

Before providing the proof, we state the following observation, which is valid for any graph $G$ (also non-regular) and is sometimes essential for finding exact values of $\gamma_{rk}(G)$.

\begin{lemma}[Discharging principle for $k$-rainbow domination]\label{discharging}
If a $\gamma_{rk}(G)$-function $f$ on $G$ has minimal number of non-colored vertices (that is, size $|\{v\colon f(v)=\emptyset\}|$ is minimal among all $\gamma_{rk}(G)$-functions on $G$), then 
$$|f(v)|\leq |\{u\colon u\sim v, f(u)=\emptyset\}|$$ for all $v\in V(G)$.
\end{lemma}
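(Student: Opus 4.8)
The plan is to argue by contradiction via a local recolouring move. Suppose $f$ is a $\gamma_{rk}(G)$-function whose number of non-coloured vertices is minimal among all $\gamma_{rk}(G)$-functions, and suppose some vertex $v$ violates the claimed inequality, so that $t:=|f(v)| > s:=|\{u\colon u\sim v,\ f(u)=\emptyset\}|$. My strategy is to produce from $f$ a new $k$-RDF $f'$ that either has strictly smaller weight (contradicting $w(f)=\gamma_{rk}(G)$) or has the same weight but strictly fewer non-coloured vertices (contradicting minimality of the empty count). I would treat the generic case $t\geq 2$ by an explicit move, and dispose of the boundary case $t=1$ (which forces $s=0$) by a short separate check.

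The central observation, which I would isolate first, is that the colours carried by $v$ are relevant only to the \emph{empty} neighbours of $v$: a non-empty neighbour imposes no domination requirement, and no vertex outside the neighbourhood of $v$ sees $f(v)$ at all. Consequently, deleting colours from $v$ can endanger only the domination of empty neighbours of $v$. To make this quantitative, for $D\subseteq f(v)$ let $B(D)$ denote the set of empty neighbours of $v$ that become undominated once the colours in $D$ are removed from $v$ (equivalently, those empty neighbours for which $v$ is the sole provider of some colour in $D$). Since $B(D)$ consists of empty neighbours of $v$, we always have $|B(D)|\leq s$.

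Now I would fix any colour $c_0\in f(v)$ and take $D=f(v)\setminus\{c_0\}$, so that $|D|=t-1$ and $v$ keeps the colour $c_0$ and stays coloured. The inequality $t>s$ then gives the key estimate $|B(D)|\leq s\leq t-1=|D|$. I define $f'$ by removing $D$ from $v$ and assigning one arbitrary colour to each vertex of $B(D)$, leaving all other vertices unchanged; giving a colour to a vertex of $B(D)$ makes it non-empty and so deletes its domination constraint, while by the central observation no other constraint can have been broken, so $f'$ is again a valid $k$-RDF. Its weight changes by $-(t-1)+|B(D)|\leq 0$, and its number of non-coloured vertices changes by $-|B(D)|$ (the vertices of $B(D)$ are newly coloured and $v$ is still coloured). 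If $|B(D)|<|D|$ the weight strictly drops, a contradiction; and if $|B(D)|=|D|=t-1$ then necessarily $s=t-1\geq 1$ and $|B(D)|=s\geq 1$, so the weight is unchanged but the empty count strictly drops, again a contradiction.

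The hard part is not the generic move but the careful validity bookkeeping that underlies it — verifying precisely that the only domination constraints that can fail when $D$ is removed from $v$ are those attached to empty neighbours of $v$, so that recolouring exactly the set $B(D)$ at unit cost per vertex restores a valid $k$-RDF, together with the clean bound $|B(D)|\leq s\leq |D|$ that makes the move weight-non-increasing and forces a strict improvement in one of the two monovariants. The remaining delicate point is the boundary case $t=1,\ s=0$, where $v$ carries a single colour and all its neighbours are coloured: here I would delete that colour, observe that $v$ is then the only vertex that could fail to be dominated, and check that in the relevant situation its neighbours already display all $k$ colours, so the deletion strictly lowers the weight and yields the final contradiction.
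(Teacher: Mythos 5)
Your generic case $|f(v)|\geq 2$ is correct, and it is essentially the paper's one-line discharging move made precise: you strip all but one colour from $v$, recolour exactly the set $B(D)$ of endangered empty neighbours, and use the bound $|B(D)|\leq s\leq |D|$ to conclude that the move either strictly lowers the weight or preserves the weight while strictly lowering the number of empty vertices. (The paper's version only invokes the weight-preserving variant, but the bookkeeping is the same.) So for every vertex carrying at least two colours your argument is complete and rigorous.

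The boundary case $|f(v)|=1$, $s=0$ is a genuine gap, and it is not one you can close: the ``short separate check'' you defer --- that the neighbours of $v$ already display all $k$ colours, so the single colour on $v$ can be deleted --- is false in general, and so is the lemma itself in this configuration. Take $G=C_6$ with $k=2$ and $f(v_0)=\{1\}$, $f(v_1)=\{2\}$, $f(v_2)=\{1\}$, $f(v_4)=\{2\}$, $f(v_3)=f(v_5)=\emptyset$. This is a valid $2$-RDF of weight $4=\gamma_{r2}(C_6)$, and its empty count $c_0=2$ is minimal, since any weight-$4$ function colours at most $4$ of the $6$ vertices. Yet $v_1$ carries one colour and has no empty neighbour, and its two neighbours display only colour $1$, so the colour on $v_1$ cannot simply be deleted; moving it to a neighbour preserves the weight but \emph{increases} the empty count, so neither of your two monovariants yields a contradiction. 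The paper's own proof has the same blind spot (``reassigning colours to non-coloured neighbours'' presupposes such neighbours exist), so the honest conclusion is that the lemma holds --- and your argument proves it --- only for vertices with $|f(v)|\geq 2$; that weaker statement is what the paper actually uses. A minor point: you reuse $c_0$ both for a colour and, implicitly, for the paper's count of empty vertices; pick a different symbol for the retained colour.
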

\begin{proof}
If the condition is not true for some $v\in V$, reassigning ("discharging") some colors from $v$ to its non-colored neighbours reduces $c_0$ without changing $w(f)$.
\end{proof}

\begin{proof}[Proof of Theorem~\ref{thm: prism}]

For $k=1$, the result is well-known~\cite{wolfram domination number} and for $k=2,3$, the results are proven in \cite{Shao 2014, Shao 2019}. The case $k\geq 6$ is also clear. 

Let $k=4$.
Using Theorem~\ref{thm: gamma rk pred} for the lower bound and Theorem \ref{thm: t<t'} with $t=3=t'+1$ for the upper bound, we obtain 
\begin{equation}\label{bounds r4}
\left\lceil \frac{4m}{3}\right\rceil\leq 
\gamma_{r4}(G)
\leq \left\lceil\frac{4m}{3}\right\rceil+\begin{cases}
0;& m\equiv 0,1\pmod 6,\\
1;& m\equiv 2,3,5\pmod 6,\\ 
2;& m\equiv 4\pmod 6.\\
\end{cases}
\end{equation}
For $m=0,1\pmod 6$, we obviously have $\gamma_{r4}(G)=\left\lceil\frac{4n}{3}\right\rceil$.
For $m=3\pmod 6$, the prism $G=\Pr(m)$ is not bipartite, so we have 
a strict inequality $\left\lceil \frac{4m}{3}\right\rceil=\frac{4m}{3}<\gamma_{r4}(G)$ by Theorem~\ref{thm: =nk/2d}
, and 
$\gamma_{r4}(G)=\left\lceil\frac{4n}{3}\right\rceil+1$ follows.
It is not difficult to find appropriate $\gamma_{r4}(G)$-functions in these 3 cases, see Table~\ref{tbl: prizme}.

For $m\equiv 2,5\pmod 6$, suppose that 
$\gamma_{r4}(G)=\left\lceil\frac{4n}{3}\right\rceil$, and let $f$ be some $\gamma_{r4}(G)$-function. Using (\ref{w(f)4}) we obtain $c_0\geq m-1/2$, hence $c_0\geq m$. Since $m=6t+2$ or $6t+5$ and every color on a vertex dominates at most 3 empty vertices, this implies that $n_i\geq 2t+1$ or $n_i\geq 2t+2$, respectively, for $i=1,2,3,4$. But then $w(f)\geq 8t+4$ or $8t+8$, which is larger than $\gamma_{r4}(G)$, a contradiction. 
Hence  $\gamma_{r4}(G)=\left\lceil\frac{4n}{3}\right\rceil+1$ and appropriate $\gamma_{r4}(G)$-functions are easily constructed from the known $\gamma_{r3}(G)$-functions, see Table~\ref{tbl: prizme} and Remark~\ref{remark: construct f'}.

Finally, for $m\equiv 4\pmod 6$, we have $\gamma_{rk}(G)=\left\lceil\frac{4m}{3}\right\rceil+a$ with $a=0,1$ or $2$. Suppose first that $a=0$ and let $f$ be some $\gamma_{rk}(G)$-function. Then by (\ref{w(f)4}), we get $c_0\geq m-1$. If $c_0\geq m=6t+4$, we have $n_i\geq 2t+2$ for all $i$, so $w(f)\geq 8t+8>8t+6=\left\lceil 4m/3\right\rceil$. So $a=0$ implies $c_0=m-1$ and  $c=m+1$. From (\ref{w(f)1}) we further obtain $e_2=3$ and $e_0=0$, so every edge has at least one colored vertex.
This implies we may wlog (without loss of generality) suppose that the set of noncolored vertices is $C_0=\{u_0,u_2,\ldots ,u_{m-2}, v_1,v_3,\ldots,v_{m-3}\}$ (note that $f(u_{m-1}),f(v_{m-1})\notin C_0$). As in the proof of Theorem 4 in \cite{Shao 2014}, we now consider subsets $\P_i=\{v_i,v_{i+1},v_{i-1},u_i,u_{i+1},u_{i-1}\}$
for $i\in\ZZ_m$, and denote by $\gamma_i=\sum_{v\in\P_i}|f(v)|$ the $f$-weight of $\P_i$. 
Observe that $\gamma_0\geq 5$, since $u_0$ is dominated by $4$ colors and $v_{m-1}\ne \emptyset$. Similarily, $\gamma_{m-2}\geq 5$, while $\gamma_i\geq 4$ for all other $i$.
Since each vertex belongs to exactly three blocks $\P_i$, we see that 
$$4m+2=3 w(f)=\sum_i \gamma_i\geq  4(m-2)+5\cdot 2.$$
Hence, equality holds and we have $\gamma_i=4$ for $i\ne 0,m-2$.
In particular, $\gamma_{m-1}=4$ implies $|f(u_{m-1})|\leq 1$ and $|f(v_0)|\leq 1$, and $\gamma_0=5$ further implies $|f(u_1)|=2$. Continuing, we see that $|f(v_4)|=|f(u_7)|=\ldots=|f(u_{m-3})|=2.$, while $|f(v)|=1$ for all other vertices from $C$. But now, it is easy to see that any choice of colors forces $f(u_{m-1})=f(v_{m-2})$, hence $u_{m-2}$ is not dominated by all colors. This contradiction shows $a\geq 1$, but in fact, $a=1$, as we can construct an appropriate $4$-RD function of weight $\left\lceil\frac{4m}{3}\right\rceil+1$, see Table~\ref{tbl: prizme}.

\medskip

Now, let $k=5$. Using the known bounds and values for $\gamma_{r4}(G)$, with some computation we obtain 
$$
\left\lceil\frac{5m}{3}\right\rceil \leq
\gamma_{r5}(G)\leq 
\left\lceil\frac{5m}{3}\right\rceil +
\begin{cases}
0;& m\equiv 0,1\pmod 6,\\
1;& m\equiv 2,3,4,5\pmod 6.
\end{cases}
$$
As in case $k=4$, the values of $\gamma_{rk}(G)$ are obtained trivially for $m\equiv 0,1,3\pmod 6$.

For $m\equiv 2,4,5\pmod 6$, write $m=6t+2,4,5$, resp., and suppose that $\gamma_{rk}(G)=\left\lceil 5n/3\right\rceil=10t+4,7,9$, respectively. Let $f$ be a $\gamma_{rk}(G)$-function. It follows from (\ref{w(f)4}) that $c_0\geq m-a$, with $a=2,1,2$ for $m\equiv 2,4,5\pmod 6$, respectively.  If $c_0\geq m$, we get $n_i\geq 2t+1,2,2$, resp., hence $w(f)\geq 5n_i=10t+5,10,10$, a contradiction. Similarly, we get that $c=m-1$ is not possible for $m=2,5\pmod 6$, while for $m=4\pmod 6$ we apply (\ref{w(f)1}) to get $e_2\leq 3$. Moreover, it now follows from (\ref{e1=}) that $e_2-e_0=3$, implying that $e_2=3$ and $e_0=0$. As in case $k=4$, we now wlog suppose that $C_0=\{u_0,u_2,\ldots,u_{m-2},v_1,v_3,\ldots,v_{m-1}\}$ and observe sets $\P_i$ with weights $\gamma_i$ to get $\gamma_0,\gamma_{m-2}\geq 6$ and $\gamma_i\geq 4$ otherwise. From $3w(f)=\sum_i\gamma_i$ we get that the equalities hold, implying that $|f(u_1)|=|f(v_4)|=\ldots=|f(u_{m-3})|=3$ and $|f(v)|=1$ for other $v\notin C_0$, implying further that $f(v_0)=f(u_{m-1})$, a contradiction. Finally, suppose $c=m-2$ with $m=6t+2$ or $5$. Repeating above arguments, we get that this case is possible with $e_2=6$, $e_0=0$. Inspecting possible sets $\P_i$ and weights $\gamma_i$, it is now not difficult to construct appropriate $\gamma_{rk}(G)$-functions for both cases.
\end{proof}

\renewcommand{\arraystretch}{1.3}
\begin{table}\centering
\begin{tabular}{|c|c|l|c|l|}
\hline
$s$&$\gamma_{r4}(G)$&
$f\left(\begin{smallmatrix}
u_0&u_1&u_2&u_3&u_4&u_5&\cdots & u_{m-1}\\
v_0&v_1&v_2&v_3&v_4&v_5&\cdots & v_{m-1}
\end{smallmatrix}\right)$
&$\gamma_{r5}(G)$
&
$f\left(\begin{smallmatrix}
u_0&u_1&u_2&u_3&u_4&u_5&\cdots & u_{m-1}\\
v_0&v_1&v_2&v_3&v_4&v_5&\cdots & v_{m-1}
\end{smallmatrix}\right)$
\\ \hline

$0$&
$\frac{4}{3}m$&
$\begin{smallmatrix}
\emptyset& 34& \emptyset& 1&  \emptyset&  2&\ldots& \\
1&  \emptyset&  2& \emptyset& 34& \emptyset &\ldots&
\end{smallmatrix}$
&
$\frac{5}{3}m$&
$\begin{smallmatrix}
\emptyset& 345& \emptyset& 1&  \emptyset&  2&\ldots& \\
1&  \emptyset&  2& \emptyset& 345& \emptyset &\ldots&
\end{smallmatrix}$
\\ \hline

$1$&
$\frac{4}{3}m+\frac{2}{3}$&
$\begin{smallmatrix}
\emptyset& 34& \emptyset& 1&  \emptyset&  2&\ldots& 2\\
1&  \emptyset&  2& \emptyset& 34& \emptyset &\ldots&1
\end{smallmatrix}$
&
$\frac{5}{3}m+\frac{1}{3}$&
$\begin{smallmatrix}
\emptyset& 345& \emptyset& 1&  \emptyset&  2&\ldots& 2\\
1&  \emptyset&  2& \emptyset& 345& \emptyset &\ldots&1
\end{smallmatrix}$
\\ \hline

$2$&
$\frac{4}{3}m+\frac{4}{3}$&
$\begin{smallmatrix}
\emptyset& 1& \emptyset   & 34&  \emptyset&  2&\ldots& \emptyset&2\\
34&  \emptyset&  2& \emptyset& 1& \emptyset &\ldots&134&\emptyset
\end{smallmatrix}$
&
$\frac{5}{3}m+\frac{2}{3}$&
$\begin{smallmatrix}
\emptyset& 345& \emptyset   & 1&  \emptyset&  2&\ldots&3&2\\
1&  \emptyset&  2& \emptyset& 345& \emptyset &\ldots&1&4
\end{smallmatrix}$

\\ \hline

$3$&
$\frac{4}{3}m+1$&
$\begin{smallmatrix}
\emptyset& 34& \emptyset   &2&  \emptyset&  1&\ldots& \emptyset&34&1\\
2&  \emptyset&  1& \emptyset& 34& \emptyset &\ldots&2&\emptyset&1
\end{smallmatrix}$
&
$\frac{5}{3}m+1$&
$\begin{smallmatrix}
\emptyset& 345& \emptyset   &1&  \emptyset&  2&\ldots& \emptyset&345&2\\
1&  \emptyset&  2& \emptyset& 345& \emptyset &\ldots&2&\emptyset&2
\end{smallmatrix}$

\\ \hline

$4$&
$\frac{4}{3}m+\frac{5}{3}$&
$\begin{smallmatrix}
\emptyset&2&\emptyset&1&\emptyset&34&\cdots&\emptyset&12&\emptyset&34\\
1&\emptyset&34&\emptyset&2&\emptyset&\cdots&1&3&2&\emptyset
\end{smallmatrix}$
&
$\frac{5}{3}m+\frac{4}{3}$&
$\begin{smallmatrix}
\emptyset&2&\emptyset&1&\emptyset&345&\cdots&\emptyset&12&\emptyset&345\\
1&\emptyset&345&\emptyset&2&\emptyset&\cdots&1&3&2&\emptyset
\end{smallmatrix}$

\\ \hline

$5$&
$\frac{4}{3}m+\frac{4}{3}$&
$\begin{smallmatrix}
\emptyset&34&\emptyset&1&\emptyset&2&\cdots&\emptyset&34&\emptyset&1&2\\
1&\emptyset&2&\emptyset&34&\emptyset&\cdots&1&\emptyset&2&\emptyset&34
\end{smallmatrix}$
&
$\frac{5}{3}m+\frac{2}{3}$&
$\begin{smallmatrix}
\emptyset&2&\emptyset&1&\emptyset&345&\cdots&\emptyset&2&1&\emptyset&345\\
1&\emptyset&345&\emptyset&2&\emptyset&\cdots&1&3&4&2&\emptyset
\end{smallmatrix}$
\\ 
\hline
\end{tabular}
\caption{$\gamma_{rk}(G)$-functions of $G=\Pr(m)$ for $k=4,5$, where
$m=6t+s\geq 3$.}
\label{tbl: prizme}

\end{table}

\begin{remark}~\label{remark: construct f'}
It follows from the proof of Theorem 1 in \cite{Shao 2014}, that whenever equality $\gamma_{r(k+1)}(G)=\gamma_{rk}(G)+\left\lfloor \gamma_{rk}(G)/k\right\rfloor$ holds
and a $\gamma_{rk}(G)$-function is known, a $\gamma_{r(k+1)}$-function $f'$ can be explicitly constructed from $f$ by selecting color $i\in{\cal C}=\{1,\ldots,k\}$ such that $n_i=\min\{n_1,\ldots,n_k\}$ and 
adding color $k+1$ to all $i$-colored vertices, that is, defining 
$$f'(v)=
\begin{cases} 
f(v)\cup  \{k+1\};&i\in f(v),\\
f(v);&i\notin f(v).\\
\end{cases}
$$
\end{remark}

For M\"obius ladders, we obtain very similar results.
\begin{theorem}\label{thm: mobius}
Let $G=\Ml(m)$ be the M\"obius ladder of order $n=2m$, $m\geq 2$. Then
\begin{itemize}
\item $\gamma_{r1}(G)=\left\lceil\frac{m}{2}\right\rceil+
\begin{cases} 
1;& m \equiv 0\pmod 4,\\
0;& m\equiv 1,2,3\pmod 4.
\end{cases}$
\item $\gamma_{r2}(G)=m$.

\item $\gamma_{r3}(G)=m+
\begin{cases} 
0;& m \equiv 3\pmod 6,\\
1,& m\equiv 0,2,4,5\pmod 6,\\
2;& m\equiv 1\pmod 6.\\
\end{cases}$
\item
$\gamma_{r4}(G)=\left\lceil\frac{4m}{3}\right\rceil+
\begin{cases} 
0;& m \equiv 3,4\pmod 6,\\
1; &m\equiv 0,1,2,5\pmod 6,\\
\end{cases}
$

\item
$\gamma_{r5}(G)= \left \lceil\frac{5m}{3}\right\rceil+
\begin{cases} 
0;& m \equiv 2,3,4,5 \pmod 6,\\
1; &m\equiv 0,1 \pmod 6.\\
\end{cases}
$

\item $\gamma_{rk}(G)=2m$ for $k\geq 6.$
\end{itemize}
Moreover, for $m\equiv 3\pmod 6$ and $3\leq k\leq 6$, the lower bound $\gamma_{rk}(G)=\frac{km}{3}$ is attained. For $k=3,4,5$, appropriate $\gamma_{rk}(G)$-functions
are given in Table~\ref{tbl: lojtre}.
\end{theorem}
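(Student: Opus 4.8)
The plan is to follow the template of the proof of Theorem~\ref{thm: prism} almost verbatim, exploiting that $\Ml(m)$ is locally identical to $\Pr(m)$ everywhere except at the single ``seam'' where the two rails are joined with a half-twist. First I would dispose of the easy ranges: for $k\geq 6$ the value $\gamma_{rk}(G)=2m$ is immediate from Theorem~\ref{thm: gamma rk pred}, and for $k=1$ the domination number is a standard computation. For $k=2,3$ I would combine the lower bound $\gamma_{rk}(G)\geq\lceil km/3\rceil$ from Theorem~\ref{thm: gamma rk pred} (here $n=2m$, $d=3$) with the explicit $k$-RDFs recorded in Table~\ref{tbl: lojtre} (for $k=3$) that realise the claimed upper values; the $k=2$ lower bound is the most delicate of the small cases, and I would either invoke the known value for this family or supply a dedicated counting argument from the lemmas of Section~\ref{sec: reg}.

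The arithmetic of which residue is extremal differs from the prism case precisely because of the twist. The outer $2m$-cycle of $\Ml(m)$ together with the rungs $\{u_i,v_i\}$ is bipartite exactly when $m$ is odd (the rung $\{u_i,v_i\}$ joins cycle-positions $i$ and $m+i$, which have equal parity iff $m$ is even), whereas $\Pr(m)$ is bipartite exactly when $m$ is even. Hence by Theorem~\ref{thm: =nk/2d} the lower bound $\gamma_{rk}(G)=\frac{km}{3}$ can be attained only when $m\equiv 3\pmod 6$ (bipartite and $6\mid 2m$, i.e.\ $3\mid m$ and $m$ odd), which is why the role played by $m\equiv 0\pmod 6$ for prisms is taken over here by $m\equiv 3\pmod 6$. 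For every $k$ with $3\leq k\leq 5$ and every residue for which $\Ml(m)$ is non-bipartite and $3\mid km$, the non-bipartiteness obstruction of Theorem~\ref{thm: =nk/2d} already upgrades the lower bound from $\lceil km/3\rceil$ to $\lceil km/3\rceil+1$.

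For $k=4,5$ I would bracket $\gamma_{rk}(G)$ between the lower bound of Theorem~\ref{thm: gamma rk pred} and the upper bound produced by applying the increment inequality of Theorem~\ref{thm: t<t'}(i) to the previously determined $\gamma_{r(k-1)}(G)$, using Remark~\ref{remark: construct f'} to exhibit the witnessing functions; this reduces matters to a handful of residue classes in which the two bounds differ by one. In each such class I would rule out the smaller value by the refined counting used for prisms: inequality (\ref{w(f)4}) forces $c_0$ to be large, hence (since in a cubic graph one colour dominates at most three empty vertices) each $n_i$ is large, which through (\ref{w(f)1}) and (\ref{e1=}) pins down $e_0=0$ and a small value of $e_2$, and thereby fixes the location of the non-coloured set $C_0$ up to symmetry. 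Then the block weights $\gamma_i=\sum_{v\in\P_i}|f(v)|$ on $\P_i=\{u_{i-1},u_i,u_{i+1},v_{i-1},v_i,v_{i+1}\}$, combined with the identity $3w(f)=\sum_i\gamma_i$, force an almost rigid colouring that can be shown to leave some vertex un-dominated, giving the contradiction; the exact values are then certified by the explicit functions in Table~\ref{tbl: lojtre}.

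The main obstacle will be the seam. The block argument for prisms is translation invariant around the two disjoint rail-cycles, but in $\Ml(m)$ the identifications $u_{m-1}\sim v_0$ and $v_{m-1}\sim u_0$ deform the two boundary blocks $\P_0$ and $\P_{m-1}$, so I must recompute the boundary weights (the analogues of the inequalities $\gamma_0,\gamma_{m-2}\geq 5$ seen for prisms) under the twisted adjacency and re-examine the forced colouring there. Likewise each construction in Table~\ref{tbl: lojtre} must be verified for validity precisely across the twist, where a careless assignment would fail to rainbow-dominate one of $u_0,v_0,u_{m-1},v_{m-1}$. I expect the interior of the argument to transfer unchanged, so that essentially all of the difficulty is concentrated in this bounded-size boundary bookkeeping.
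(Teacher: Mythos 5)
Your overall architecture matches the paper's: dispose of $k=1$ and $k\geq 6$ immediately; give a dedicated counting argument for $k=2$ (the paper does exactly this, via the discharging lemma and the block weights $\gamma_i$ on $\P_i$, deriving $\alpha\geq\beta+3$ and then eliminating every admissible block of weight $2$); exploit the parity flip in the bipartiteness criterion (you correctly observe that $\Ml(m)$ is bipartite iff $m$ is odd, so $m\equiv 3\pmod 6$ takes over the role that $m\equiv 0\pmod 6$ plays for prisms); and for $k=4,5$ bracket $\gamma_{rk}(G)$ between Theorem~\ref{thm: gamma rk pred} and Theorem~\ref{thm: t<t'}(i) and close the width-one gaps by the refined counting from the prism proof, with the boundary blocks at the twist recomputed. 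That is precisely the paper's plan (which, for $k=4,5$, it also largely leaves as ``similar, details omitted'').

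There is, however, one concrete gap: the case $k=3$, $m\equiv 1\pmod 6$, where the claimed value is $m+2$. Your toolkit for $k=3$ consists of the lower bound $\lceil 3\cdot 2m/6\rceil=m$, the upgrade to $m+1$ from the non-attainability criterion of Theorem~\ref{thm: =nk/2d}, and explicit constructions; none of these excludes $\gamma_{r3}(G)=m+1$, and you explicitly reserve the refined block counting for $k=4,5$. This exclusion is in fact the longest single argument in the paper's proof: assuming $w(f)=m+1=6t+2$, one forces $n_i=2t$ for some colour, hence $c_0=6t$, $c_1=c$, $e_0=0$ and $e_2=3$, and a case analysis over the three doubly-coloured edges propagates a rigid colouring pattern around the ladder until the twist leaves a vertex undominated. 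Without this (or an equivalent) argument the value $m+2$ is unproved. A smaller caveat: your fallback of ``invoking the known value'' for $\gamma_{r2}(\Ml(m))$ is not actually available, since the cited results on $2$-rainbow domination cover generalized Petersen graphs (hence prisms) but not M\"obius ladders, so the dedicated counting argument you offer as an alternative is in fact mandatory.
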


\begin{proof}
For $k=1$, the result is well known, see~\cite{wolfram domination number}. 
For $k=2$, we can define a $2$-RDF $f$ of weight $w(f)=m$ on $G$ by setting $f(u_{2i})=\{1\}$, $f(v_{2i+1})=\{2\}$ and $f(v)=\emptyset$ otherwise, so $\gamma_{r2}(G)\leq m$. Now suppose $\gamma_{r2}(G)\leq m-1$ and let $f$ be an appropriate $\gamma_{r2}(G)$-function of weight $w(f)\leq m-1$. Note that we can wlog suppose that $f$ has the minimal number of non-colored vertices and apply Lemma~\ref{discharging} (Discharging principle) whenever needed. Denote by 
$\P_i=\{u_i,v_i,u_{i+1},v_{i+1},u_{i-1},v_{i-1}\}$ and $\gamma_i=\sum_{v\in \P_i}|f(v)|$ for $i\in\ZZ_m$. Observe first that $\gamma_i\geq 0,1$ is not possible, so $\gamma_i\geq 2$.
Now denote by $\alpha$ the number of blocks $\P_i$ with $\gamma_i=2$ and by $\beta$ the number of blocks with $\gamma_i\geq 4$. Then we have
$$3(m-1)\geq 3w(f)=\sum_i\gamma_i\geq 2\alpha+4\beta+3(m-\alpha-\beta),$$
hence $\alpha\geq 3+\beta$.  Now let $\gamma_i=2$ and inspect the possible  values of $f$ on $\P_i$.
\begin{itemize}
\item It is easy to see that blocks with values
$\left(\begin{smallmatrix}
f(u_{i-1})&f(u_i)&f(u_{i+1})\\
f(v_{i-1})&f(v_i)&f(v_{i+1})
\end{smallmatrix}\right)$
equal to 
$\left(\begin{smallmatrix}
12&0&0\\0&0&0
\end{smallmatrix}\right),$
$\left(\begin{smallmatrix}
1&0&0\\2&0&0
\end{smallmatrix}\right),$
$\left(\begin{smallmatrix}
1&0&0\\1&0&0
\end{smallmatrix}\right),$
$\left(\begin{smallmatrix}
1&0&0\\0&0&2
\end{smallmatrix}\right),$
$\left(\begin{smallmatrix}
1&0&0\\0&0&1
\end{smallmatrix}\right)$,
$\left(\begin{smallmatrix}
1&0&0\\0&1&0
\end{smallmatrix}\right)$
or reflections of such blocks
are not possible.
\item The blocks of types
$\left(\begin{smallmatrix}
0&12&0\\0&0&0
\end{smallmatrix}\right),$
$\left(\begin{smallmatrix}
0&1&0\\0&2&0
\end{smallmatrix}\right),$
$\left(\begin{smallmatrix}
0&1&0\\0&1&0
\end{smallmatrix}\right),$
are possible, but for each of them we have $\gamma_{i\pm1}\geq 4$, so each their occurence also increases $\beta$. Since $\alpha\geq \beta+3$, we may wlog assume that there are no such blocks.
\item Thus, we have at least $\alpha$ blocks of type
$\left(\begin{smallmatrix}
0&1&0\\2&0&0
\end{smallmatrix}\right)$ or
$\left(\begin{smallmatrix}
0&1&0\\0&0&2
\end{smallmatrix}\right).$
However, for each such block $\P_i$ either $\gamma_{i+1}$ or $\gamma_{i-1}$ is at least $4$.
If $\gamma_i=\gamma_{i+1}=2$, then $\gamma_{i-1}=4$ and $\gamma_{i+2}=4$, hence the average weight of these four blocks is $3$. If $\gamma_i=2$ and $\gamma_{i+1}=3$, then we must have $\gamma_{i-1}=4$ and $\gamma_{i-2}=3$, and their average weight is again $3$. Hence, removing any such quadruplet of blocks from the equation still forces $\alpha\geq \beta+3$, but now there are no more possibilities for $\P_i$ with $\gamma_i=2$, a contradiction.
\end{itemize}

Let $k=3$. Then $\left\lceil nk/2d\right\rceil=m$ is the lower bound for $\gamma_{r3}(G)$ for all $m$. However, for the lower bound to be exact we must have $6|2m$ and $G$ bipartite, which only happens for $m=3\pmod 6$. For $m=0,1,4,5\pmod 6$, we have $\gamma_{r3}(G)\geq m+1$, but in fact equality holds, as appropriate functions are easily constructed, see Table~\ref{tbl: lojtre 3}. 

It remains to show that $\gamma_{r3}= m+2$ for $m=6t+1$. Again, by Table~\ref{tbl: lojtre 3} we confirm that this is the upper bound.
Now suppose $w(f)=m+1=6t+2$ for some $3$-RDF $f$. Then $c\leq w(f)$ and hence $c_0\geq 6t$. Since each colored vertex dominates at most $3$ non-colored vertices, we must have $n_1,n_2,n_3\geq 2t$, but also $n_1+n_2+n_3=6t+2$, so $n_i=2t$ for at least one $i$.
Wlog assume $n_3=2t$. Then $c_0\leq 6t$ and hence $c_0=6t$ and $c=6t+2=w(f)$, which implies $c_1=c$ and $c_2=c_3=0$, so each vertex is colored with at most $1$ color.
By checking all possible cases for adjacent pairs of vertices, that is, $u_i\sim v_i$, $u_i\sim u_{i+1}$, or $u_{m-1}\sim v_0$, it is now easy to see that $f(u)=f(v)=\emptyset$ for some pair $u\sim v$ implies $|f(w)|=2$ for some $w$ adjacent to $u$ or $v$, a contradiction. Thus, we have $e_0=0$ and therefore $e_2=3$, so we have exactly 3 edges with both end-vertices colored (and hence no 4-cycle with all vertices colored). Note also, that since $n_3=2t$, color $3$ cannot be used on such edges.

We now check all different pairs of $u\sim v$ with $|f(u)|=|f(v)|=1$ to arrive at the contradiction. (Alternatively, we could denote $\P_i=\{u_i,v_i,u_{i+1},v_{i+1},u_{i-1},v_{i-1}\}$ and $\gamma_i=\sum_{v\in \P_i}|f(v)|$ to see that $3\leq \gamma_i\leq 4$ and $\sum_i \gamma_i=3w(f)$ implies that $\gamma_i=4$ for exactly $3$ consecutive $i$, and get the contradiction after some further inspection.)
First, we check pairs $u_i\sim v_i$ with $i=1,\ldots, m-2$. Wlog $f(u_1)=f(v_1)\ne 0$. Then exactly one of $u_0,v_0$ is non-colored, say $f(u_0)=\emptyset, f(v_0)\ne \emptyset$. Also, exactly one of $u_2,v_2$ is non-colored:
\begin{itemize}
\item If $f(u_2)=\emptyset$, we have $f(v_2)\ne \emptyset$ and so $v_0v_1, v_1u_1, v_2v_3$ are the 3 edges with both end-vertices colored, therefore exactly one of $u_i,v_i$ is noncolored for all other $i$. This implies $f(v_3)=f(u_4)=f(v_5)=\ldots=f(v_{n-2})=f(u_{n-1})=\emptyset$.
Since $f(v)=\{3\}$ only for vertices with 3 non-colored neighbours, we have $\{3\}=f(u_3)=f(v_6)=\ldots =f(v_{m-1})$. Now take wlog $f(v_2)=\{2\}$ and $f(u_1)=\{1\}$ and see this forces $\{2\}=f(v_2)=f(u_5)=\ldots=f(u_1)$, a contradiction.
\item If $f(u_2)\ne\emptyset$, we have $f(v_2)=\emptyset$ and so $v_0v_1,v_1u_1,u_1u_2$ are the 3 edges with both end-vertices colored. Again exactly one of $u_i,v_i$ is noncolored for all other $i$ implying that $f(v_{m-1})=\emptyset$, a contradiction, since $v_{m-1}\sim u_0$ and $f(u_0)=\emptyset$.
so for all other $j$, we have exactly one of $u_j,v_j$ colored. 
\end{itemize}
In similar fashion, we obtain a contradiction in all other cases. Therefore $\gamma_{r3}(G)=m+2$ for $m\equiv 1\pmod 6$.

For $k=4,5$, the proof is similar as the proof for prisms. For instance, we use Theorems~\ref{thm: t<t'} and \ref{thm: gamma rk pred} to obtain 
$$\left\lceil \frac{4}{3}m\right\rceil \leq \gamma_{r4}(G)\leq 
\left\lceil \frac{4}{3}m\right\rceil+\begin{cases}
0,&m\equiv 3,4\pmod 6;\\
1,&m\equiv 0,2,5\pmod 6;\\
2,&m\equiv 1\pmod 6.
\end{cases}$$ 
Then we find an appropriate $4$-RDF (see Table~\ref{tbl: lojtre}) and apply different combinatorial arguments to prove that lower weight is not possible. We omit further details.
\end{proof}

\begin{table}\label{tbl: lojtre 3}
\centering
\begin{tabular}{|c|c|l|c|l|}
\hline
$s$&$\gamma_{r3}(G)$&
$f\left(\begin{smallmatrix}
u_0&u_1&u_2&u_3&u_4&u_5&\cdots & u_{m-1}\\
v_0&v_1&v_2&v_3&v_4&v_5&\cdots & v_{m-1}
\end{smallmatrix}\right)$
\\ \hline
$0$&
$m+1$&
$\begin{smallmatrix}
\emptyset& 2& \emptyset& 1&  \emptyset&  3&\ldots& \emptyset& 2& \emptyset& 1&  \emptyset&3  \\
1&  \emptyset&  3& \emptyset& 2& \emptyset &\ldots&
1&  \emptyset&  3& \emptyset& 2& 3
\end{smallmatrix}$
\\ \hline

$1$&
$m+2$&
$\begin{smallmatrix}
\emptyset& 2& \emptyset& 1&  \emptyset&  3&\ldots& 1\\
1&  \emptyset&  3& \emptyset& 2& \emptyset &\ldots&1
\end{smallmatrix}$
\\ \hline

$2$&
$m+1$&
$\begin{smallmatrix}
\emptyset& 2& \emptyset& 1&  \emptyset&  3&\ldots& 0&2\\
1&  \emptyset&  3& \emptyset& 2& \emptyset &\ldots&1&3
\end{smallmatrix}$
\\ \hline

$3$&
$m$&
$\begin{smallmatrix}
\emptyset& 2& \emptyset& 1&  \emptyset&  3&\ldots& 0&2&0\\
1&  \emptyset&  3& \emptyset& 2& \emptyset &\ldots&1&0&3
\end{smallmatrix}$
\\ \hline

$4$&
$m+1$&
$\begin{smallmatrix}
\emptyset& 2& \emptyset& 1&  \emptyset&  3&\ldots& 0&2&0&1\\
1&  \emptyset&  3& \emptyset& 2& \emptyset &\ldots&1&3&3&3
\end{smallmatrix}$
\\ \hline

$5$&
$m+1$&
$\begin{smallmatrix}
\emptyset& 2& \emptyset& 1&  \emptyset&  3&\ldots& 0&2&0&1&3\\
1&  \emptyset&  3& \emptyset& 2& \emptyset &\ldots&1&0&3&0&2
\end{smallmatrix}$
\\ 
\hline
\end{tabular}

\caption{$\gamma_{r3}(G)$-functions for $G=\Ml(m)$, where $m=6t+s\geq 2$.}
\end{table}
\begin{table}\centering
\begin{tabular}{|c|c|l|c|l|}
\hline
$s$&$\gamma_{r4}(G)$&
$f\left(\begin{smallmatrix}
u_0&u_1&u_2&u_3&u_4&u_5&\cdots & u_{m-1}\\
v_0&v_1&v_2&v_3&v_4&v_5&\cdots & v_{m-1}
\end{smallmatrix}\right)$
&$\gamma_{r5}(G)$
&
$f\left(\begin{smallmatrix}
u_0&u_1&u_2&u_3&u_4&u_5&\cdots & u_{m-1}\\
v_0&v_1&v_2&v_3&v_4&v_5&\cdots & v_{m-1}
\end{smallmatrix}\right)$
\\ \hline
$0$&
$\frac{4}{3}m+1$&
$\begin{smallmatrix}
\emptyset& 34& \emptyset& 1&  \emptyset&  2&\ldots& \emptyset& 34& \emptyset& 1&  \emptyset&  2&\\
1&  \emptyset&  2& \emptyset& 34& \emptyset &\ldots&
1&  \emptyset&  2& \emptyset& 34& 2 
\end{smallmatrix}$
&
$\frac{5}{3}m+1$&
$\begin{smallmatrix}
\emptyset&345&\emptyset&1&\emptyset&2&\cdots&\emptyset&345&\emptyset
&1&\emptyset&2\\
1&\emptyset&2&\emptyset&345&\emptyset&\cdots&1&\emptyset&2
&\emptyset&345&2
\end{smallmatrix}$
\\ \hline

$1$&
$\frac{4}{3}m+\frac{5}{3}$&
$\begin{smallmatrix}
\emptyset& 34& \emptyset& 1&  \emptyset&  2&\ldots& 3\\
1&  \emptyset&  2& \emptyset& 34& \emptyset &\ldots&12
\end{smallmatrix}$
&
$\frac{5}{3}m+\frac{4}{3}$&
$\begin{smallmatrix}
\emptyset&345&\emptyset&1&\emptyset&2&\cdots&3\\
1&\emptyset&2&\emptyset&345&\emptyset&\cdots&12
\end{smallmatrix}$
\\ \hline

$2$&
$\frac{4}{3}m+\frac{4}{3}$&
$\begin{smallmatrix}
\emptyset& 34& \emptyset& 1&  \emptyset&  2&\ldots& 0&34\\
1&  \emptyset&  2& \emptyset& 34& \emptyset &\ldots&1&2
\end{smallmatrix}$
&
$\frac{5}{3}m+\frac{2}{3}$&
$\begin{smallmatrix}
\emptyset&345&\emptyset&1&\emptyset&2&\cdots&3&4\\
1&\emptyset&2&\emptyset&345&\emptyset&\cdots&1&2
\end{smallmatrix}$

\\ \hline

$3$&
$\frac{4}{3}m$&
$\begin{smallmatrix}
\emptyset& 34& \emptyset   &1&  \emptyset&  2&\ldots& \emptyset&34&\emptyset\\
1&  \emptyset&  2& \emptyset& 34& \emptyset &\ldots&1&\emptyset&2
\end{smallmatrix}$
&
$\frac{5}{3}m$&
$\begin{smallmatrix}
\emptyset&345&\emptyset&1&\emptyset&2&\cdots&\emptyset&345&\emptyset\\
1&\emptyset&2&\emptyset&345&\emptyset&\cdots&1&\emptyset&2
\end{smallmatrix}$

\\ \hline

$4$&
$\frac{4}{3}m+\frac{2}{3}$&
$\begin{smallmatrix}
\emptyset& 34& \emptyset   &1&  \emptyset&  2&\ldots& \emptyset&34&0&1\\
1&  \emptyset&  2& \emptyset& 34& \emptyset &\ldots&1&0&2&2
\end{smallmatrix}$
&
$\frac{5}{3}m+\frac{1}{3}$&
$\begin{smallmatrix}
\emptyset&345&\emptyset&1&\emptyset&2&\cdots&\emptyset&345&\emptyset&1\\
1&\emptyset&2&\emptyset&345&\emptyset&\cdots&1&\emptyset&2&2
\end{smallmatrix}$

\\ \hline

$5$&
$\frac{4}{3}m+\frac{4}{3}$&
$\begin{smallmatrix}
\emptyset& 34& \emptyset   &1&  \emptyset&  2&\ldots& \emptyset&34&\emptyset&1&4\\
1&  \emptyset&  2& \emptyset& 34& \emptyset &\ldots&1&\emptyset&2&3&2
\end{smallmatrix}$
&
$\frac{5}{3}m+\frac{2}{3}$&
$\begin{smallmatrix}
\emptyset&345&\emptyset&1&\emptyset&2&\cdots&\emptyset&345&\emptyset&1&4\\
1&\emptyset&2&\emptyset&345&\emptyset&\cdots&1&\emptyset&2&3&2
\end{smallmatrix}$
\\ 
\hline
\end{tabular}
\caption{$\gamma_{rk}(G)$-functions of $G=\Ml(m)$ for $k=4,5$ where $m=6t+s\geq2$.}
\label{tbl: lojtre}

\end{table}
\begin{corollary}
The only connected $3$-rainbow domination regular Cayley graphs over abelian groups are prisms $\Pr(m)$, where $m\equiv 0\pmod 6$, and M\"obius ladders $\Ml(m)$, where $m\equiv 3\pmod 6$.
\end{corollary}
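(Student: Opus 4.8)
The plan is to combine the classification of connected cubic Cayley graphs over abelian groups with the exact $3$-rainbow domination numbers already computed in Theorems~\ref{thm: prism} and~\ref{thm: mobius}. First I would unwind the definition: a cubic graph $G$ of order $n=2m$ is $3$-rainbow domination regular precisely when $\gamma_{r3}(G)=\frac{n}{2}=m$, that is, exactly when the general lower bound $\gamma_{r3}(G)\geq\lceil\frac{3n}{6}\rceil=m$ supplied by Theorem~\ref{thm: gamma rk pred} is met with equality. By the classification Proposition, every connected cubic Cayley graph over a finite abelian group is isomorphic to a prism $\Pr(m)$ or a M\"obius ladder $\Ml(m)$ with $m\geq 2$. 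Hence it suffices to decide, within each of these two families, for which values of $m$ one has $\gamma_{r3}=m$.

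The second step is then just a matter of reading off the case distinctions already established. For prisms, Theorem~\ref{thm: prism} gives $\gamma_{r3}(\Pr(m))=m+c$, where the additive constant $c$ equals $0$ exactly when $m\equiv 0\pmod 6$ and is strictly positive in every other residue class modulo $6$; consequently $\Pr(m)$ is $3$-RDR if and only if $m\equiv 0\pmod 6$. Symmetrically, Theorem~\ref{thm: mobius} gives $\gamma_{r3}(\Ml(m))=m+c'$ with $c'=0$ exactly when $m\equiv 3\pmod 6$ and $c'>0$ otherwise, so $\Ml(m)$ is $3$-RDR if and only if $m\equiv 3\pmod 6$. Forming the union of the two families produces precisely the list in the statement, and no further graph arises since the classification is exhaustive.

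Because the substantive work has already been carried out in Theorems~\ref{thm: prism} and~\ref{thm: mobius}, there is essentially no obstacle remaining at this final assembly stage; the only point that deserves a word of care is confirming that the $3$-RDR property is genuinely governed by the single value $\gamma_{r3}(G)$ and imposes no hidden additional constraint. This is immediate from the definition, which requires only $\gamma_{rd}(G)=|V(G)|/2$ for $d=3$. As a consistency check one may note that the ``moreover'' clauses of the two quoted theorems assert that for exactly these residue classes the stronger equalities $\gamma_{rk}(G)=\frac{km}{3}$ hold for all $3\leq k\leq 6$, in agreement with the preceding Corollary characterising $d$-RDR graphs; thus the listed prisms and M\"obius ladders satisfy the full chain of equalities as well, and the corollary follows.
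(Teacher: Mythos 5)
Your proposal is correct and follows exactly the route the paper intends: the corollary is stated without proof precisely because it is the immediate combination of the classification of connected cubic abelian Cayley graphs as prisms and M\"obius ladders, the definition of $3$-RDR as $\gamma_{r3}(G)=m$, and the case lists in Theorems~\ref{thm: prism} and~\ref{thm: mobius}. Your closing remark tying the single equality $\gamma_{r3}(G)=m$ to the full chain $\gamma_{rk}(G)=km/3$ via the earlier corollary on $d$-RDR graphs is the right consistency check and matches the paper's ``moreover'' clauses.
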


\section{Concluding remarks}\label{sec: remarks}

The investigations of $k$-rainbow domination numbers for $d$-regular graphs from previous sections can be naturally generalized to some other classes of cubic graphs or to specific classes of $d$-regular graphs with $d\geq 4$. However, determining exact $k$-rainbow domination numbers for all $4$-regular Cayley graphs over abelian groups might turn out to be quite difficult already.

It would also be interesting to obtain further classification of $d$-RDR graphs. For two more examples, see Figure~\ref{4rd}. It is easy to check that the
tesseract graph $Q_4$ is a $4$-RDR graph, and the wreath graph $G=C_m[2K_1]$ (the lexicographic product of a cycle with 2 isolated vertices) is a $4$-RDR graph for all $m\geq 3$ with $m\equiv 0\pmod 4$.

\begin{figure}[h!]\label{4rd}
\centering
\includegraphics[width=6cm]{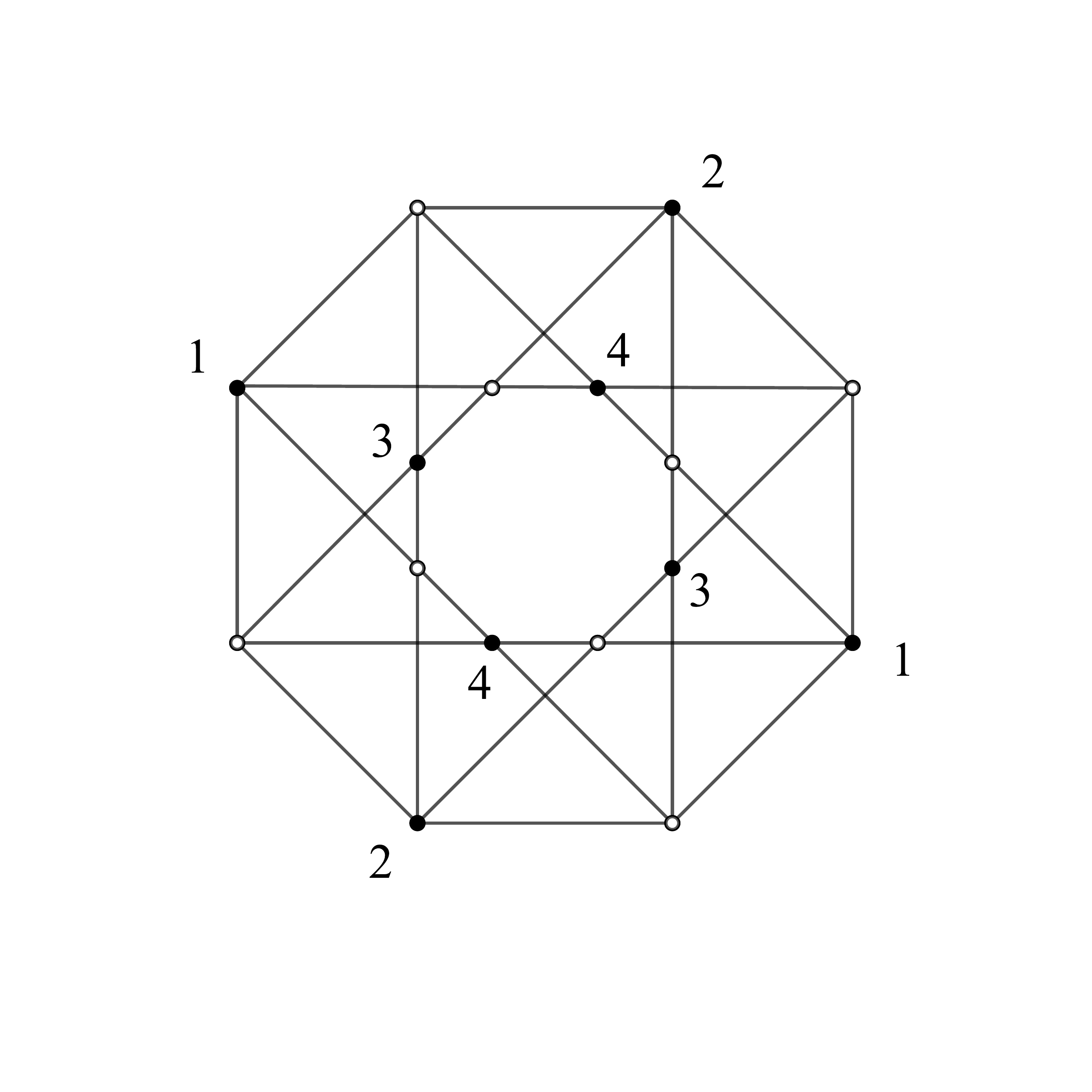}
\includegraphics[width=6cm]{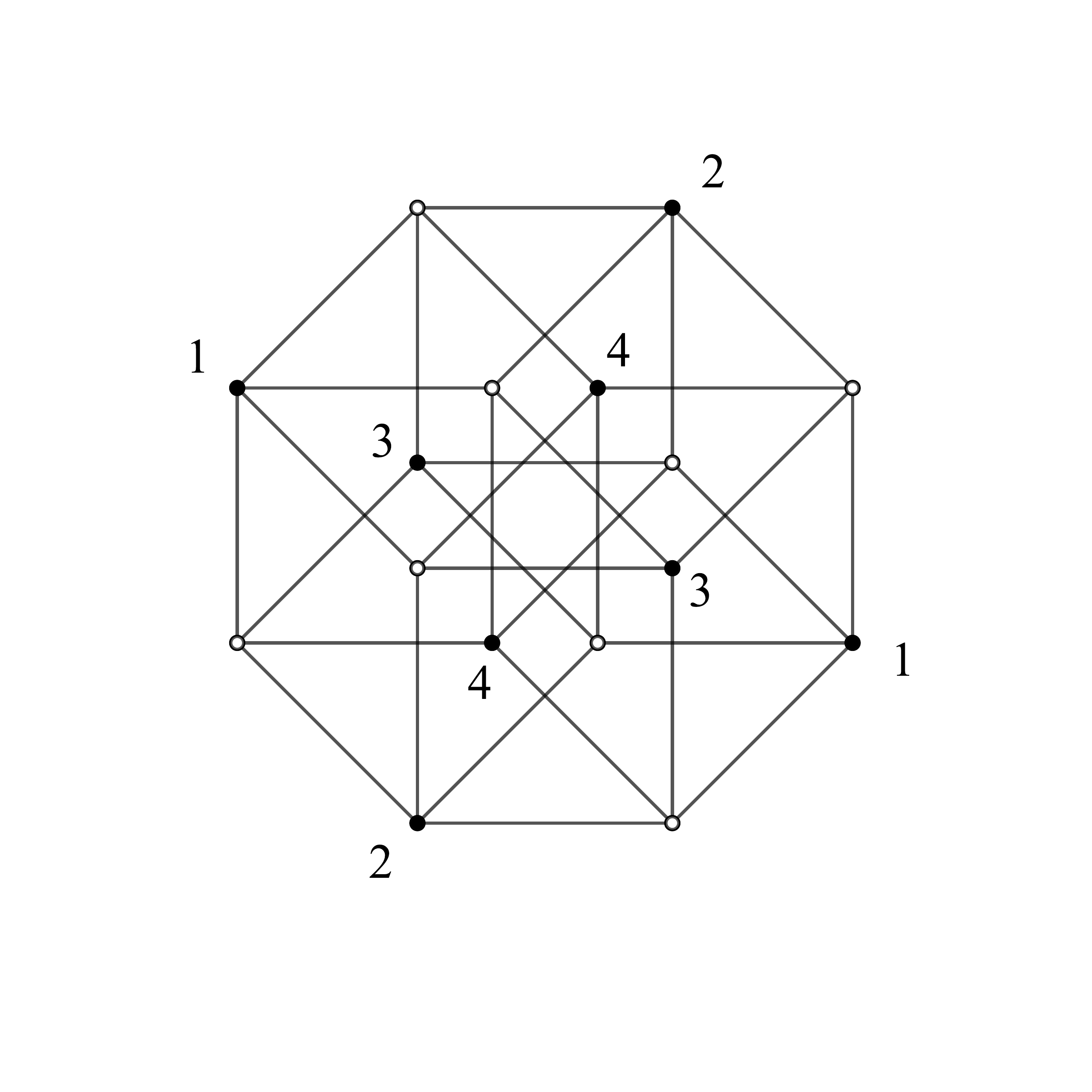}
\caption{Two $4$-RDR graphs, the wreath graph $C_4[2K_1]$ and the tesseract $Q_4$, with indicated values of their $\gamma_{r4}$-functions.}
\end{figure}

We observe that in both cases, the graphs are Cayley graphs over abelian groups, namely $Q_4\cong \Cay(\ZZ_4,\{e_1,e_2,e_3,e_4\})$ and $C_m[2K_1]\cong \Cay(\ZZ_m\times \ZZ_2,\{(\pm 1,0),(\pm 1,1)\})$. 
We can thus ask the following questions:
\begin{itemize}
\item Question 1: Are there any $d$-RDR graphs that are not obtained as Cayley graphs over some abelian group? 
\item 
Question 2: More generaly, are there any  $d$-RDR graphs that are not vertex transitive?
\end{itemize}
\section{Acknowledgments}
This work is supported in part by the Slovenian Research Agency (ARRS), research program P1-0285 and research projects J1-9108, J1-1694, J1-1695. The author also thanks to Primo\v z \v Sparl for suggesting the problem.

\end{document}